\documentclass[12pt,reqno]{amsart}
\usepackage{amsthm,amsmath,amssymb,amscd,graphicx,enumerate,xcolor,subsupscripts,fullpage}
\usepackage[all]{xy}
\usepackage[latin1]{inputenc}
\usepackage[T1]{fontenc}
\usepackage{hyperref}
\usepackage{ifthen}

\newboolean{workmode}
\setboolean{workmode}{false}



\input xy
\xyoption{all}


\setlength{\parskip}{.5\baselineskip}



\makeatletter
\def\th@plain{%
  \thm@notefont{}
  \itshape 
}
\def\th@definition{%
  \thm@notefont{}
  \normalfont 
}
\makeatother

\newcommand{\op}[1]{{#1}^{\sf op}}

\newcommand{\R}{\mathbb{R}}

\newcommand{\set}{{\sf Set}}

\newcommand{\vect}{{\sf Vect}}
\newcommand{\ab}{{\sf Ab}}
\newcommand{\grp}{{\sf Group}}
\newcommand{\euc}{{\sf Euc}}
\newcommand{\open}{{\sf Open}}

\newcommand{\rra}{\rightrightarrows}

\newcommand{\ep}{\mathfrak{e}}

\newcommand{\Cc}{\mathcal{C}}
\newcommand{\Cd}{\mathcal{D}}

\newcommand{\Cn}{\mathcal{N}}

\newcommand{\calA}{{\mathcal{A}}}

\newcommand{\calC}{{\mathcal{C}}}
\newcommand{\calD}{{\mathcal{D}}}

\newcommand{\calF}{{\mathcal{F}}}
\newcommand{\calG}{{\mathcal{G}}}
\newcommand{\calH}{{\mathcal{H}}}

\newcommand{\calM}{{\mathcal{M}}}
\newcommand{\calN}{{\mathcal{N}}}
\newcommand{\calO}{{\mathcal{O}}}

\newcommand{\calQ}{{\mathcal{Q}}}
\newcommand{\calR}{{\mathcal{R}}}
\newcommand{\calS}{{\mathcal{S}}}
\newcommand{\calT}{{\mathcal{T}}}
\newcommand{\calU}{{\mathcal{U}}}
\newcommand{\calV}{{\mathcal{V}}}

\newcommand{\QQ}{{\mathbb{Q}}}  
\newcommand{\RR}{{\mathbb{R}}}  
\newcommand{\TT}{{\mathbb{T}}}  
\newcommand{\ZZ}{{\mathbb{Z}}}  

\newcommand{\act}{{\operatorname{act}}} 
\newcommand{\Diff}{{\operatorname{Diff}}}  
\newcommand{\diffeol}{\mathbf{Diffeol}} 
\newcommand{\ev}{{\operatorname{ev}}}   
\newcommand{\Hom}{{\operatorname{Hom}}} 
\newcommand{\id}{{\operatorname{id}}}  
\newcommand{\IZ}{{\operatorname{IZ}}} 
\newcommand{\im}{{\operatorname{im}}}  
\newcommand{\pr}{{\operatorname{pr}}} 
\newcommand{\supp}{{\operatorname{supp}}}  


\newcommand{\CIN}{{C^\infty}}   
\newcommand{\del}{\partial}  
\newcommand{\ftimes}[2]{{\lrsubscripts{\times}{#1}{#2}}} 
\newcommand{\toto}{{~\rightrightarrows~}} 


\newcommand{\ifwork}[1]{\ifthenelse{\boolean{workmode}}{#1}{}}
\newcommand{\comment}[1]{}
\newcommand{\mute}[1]{}
\newcommand{\printname}[1]{}

\ifwork{
\renewcommand{\comment}[1]{{\marginpar{*}\ \scriptsize{#1}\ }}
\renewcommand{\printname}[1]
    {{\color{brown}{\makebox[0pt]{\hspace{-1.0in}\raisebox{8pt}{\tiny #1}}}}}
}

\newcommand{\labell}[1] {\label{#1} \printname{#1}}




\makeatletter
\newtheorem*{rep@theorem}{\rep@title}
\newcommand{\newreptheorem}[2]{%
\newenvironment{rep#1}[1]{%
 \def\rep@title{#2 \ref{##1}}%
 \begin{rep@theorem}}%
 {\end{rep@theorem}}}
\makeatother


\theoremstyle{plain}
\newtheorem{theorem}{Theorem}[section]
\newreptheorem{theorem}{Theorem} 	

\newtheorem{proposition}[theorem]{Proposition}

\newtheorem{corollary}[theorem]{Corollary}
\newtheorem{lemma}[theorem]{Lemma}

\theoremstyle{definition}
\newtheorem{definition}[theorem]{Definition}
\newtheorem{example}[theorem]{Example}
\newtheorem{examples}[theorem]{Examples}
\newtheorem{remark}[theorem]{Remark}

\newtheorem{construction}[theorem]{Construction}

\definecolor{jaw}{rgb}{0,.5,0}

\newtheorem*{corollary*}{Corollary}

\definecolor{jaw}{HTML}{009933}
\newcommand{\snote}[1]{{\comment{{\color{red}{\em S:} #1}}}} 
\newcommand{\jnote}[1]{{\comment{{\color{jaw}{\em J:} #1}}}} 

\title{Sheaves, principal bundles, and \v{C}ech cohomology for diffeological spaces}

\author{Derek Krepski}
\address{Department of Mathematics, University of Manitoba, Winnipeg, Manitoba, CA}
\email{derek.krepski@umanitoba.ca}

\author{Jordan Watts}
\address{Department of Mathematics, Central Michigan University, Mount Pleasant, Michigan, USA}
\email{jordan.watts@cmich.edu}

\author{Seth Wolbert}
\address{Department of Mathematics and Statistics, Minnesota State University, Mankato, Minnesota, USA}
\email{seth.wolbert@mnsu.edu}

\thanks{2020 \emph{Mathematics Subject Classification.} Primary 57R55, 55R15; Secondary 55R65}
\keywords{diffeological space, sheaves, principal bundles, \v{C}ech cohomology}
\date{\today}

\begin{document}

\begin{abstract}The purpose of this note is to define sheaves for diffeological spaces and give a construction of their \v{C}ech cohomology.  As an application, we prove that the first degree \v{C}ech cohomology classes for the sheaf of smooth functions to an abelian diffeological group $G$ classify diffeological principal $G$-bundles.
\end{abstract}

\maketitle

\section{Introduction}\labell{s:introduction}

Diffeological spaces yield a convenient category on which one may study singular spaces and function spaces in the context of differential geometry using a simple yet powerful language.  These spaces were defined by Souriau in the 1980s \cite{Souriau}, and is a variant of previously defined categories, in particular Chen spaces, developed earlier by K.-T. Chen in the 1970s \cite{KTChen}.  An interesting feature of these spaces is the large number of constructions of differentiable manifolds or topological spaces that carry over to diffeological spaces.  These constructions range from broader categorical type constructions (\emph{e.g.}\ fiber products, coproducts, quotients, etc.) to more specific adaptations of higher level concepts (including de Rham forms and their cohomology, principal/fiber bundles, and classifying spaces; see \cite{ChristensenWu} and \cite{MagnotWatts} for two independent developments of classifying spaces).  The purpose of this note is to build a theory of sheaves over diffeological spaces and their \v{C}ech cohomology.

The main idea in defining a sheaf over a diffeological space is to use plots instead of open sets as in the classical setting of topology.  With this framework we can define \v{C}ech cohomology, and then prove:

\begin{reptheorem}{t:main}
Let $X$ be a diffeological space and $G$ an abelian diffeological group.  There is a natural bijection between the isomorphism classes of principal $G$-bundles on $X$ and the group of \v{C}ech cohomology classes $\check{H}^1(X,G)$.
\end{reptheorem}

As an immediate corollary, we combine the above theorem with the main results of \cite{MagnotWatts,ChristensenWu}.  Given a diffeological space $X$ and a diffeological group $G$, there is a natural bijection from isomorphism classes of so-called D-numerable principal $G$-bundles on $X$ to smooth homotopy classes of smooth maps from $X$ to the classifying space $BG$ of $G$; denote the set of homotopy classes by $[X,BG]$.  (Note: the Hausdorff, second-countable, and smoothly paracompact requirements appearing in \cite{MagnotWatts} are unnecessary; see \cite[Lemmas 4.2, 4.3]{ChristensenWu}.)  See the cited papers for definitions and full details.

\begin{corollary*}\labell{c:main}
Let $X$ be a diffeological space and $G$ an abelian diffeological group such that all principal $G$-bundles are D-numerable (\emph{e.g.}\ a manifold).  There is a natural bijection between $\check{H}^1(X,G)$ and $[X,BG]$.
\end{corollary*}

%
This paper was largely inspired by the theory of de Rham cohomology developed for diffeological spaces as well as the work of Behrend and Xu in \cite{BehrendXu}.  Indeed, the sheaf of de Rham forms over a diffeological space provides a prototypical sheaf of vector spaces.  While in this paper we stick to the more classical geometric language in which most diffeological literature is cast, a lot of insight can be gained through using the language of Behrend and Xu; namely, Grothendieck topologies and stacks.  In particular, as shown in \cite{WattsWolbert}, any diffeological space $X$ has an associated stack $\mathcal{X}$ via the Grothendieck construction, and the sheaves and global sections defined in this paper are sheaves and global sections (in the stacky sense) over $\mathcal{X}$.  

Similar work on a diffeological context for sheaves already appears in the work of Nezhad and Ahmadi \cite{NezhadAhmadi}, in which the authors also define sheaves and presheaves of sets in the same way we do, but they develop the theory in a different direction than the present paper, defining so-called quasi-sheaves which are presheaves satisfying nice properties with respect to generating families and limits taken over them.  It is possible that some of the theory developed in our paper may be generalized to quasi-sheaves.  The authors in \cite{NezhadAhmadi} also connect their theory to the D-topology of a diffeological space, establishing the desired fact that a sheaf on a diffeological space using the plot definition yields a sheaf using the topological definition.

Patrick Iglesias-Zemmour studies an independent and different version of \v{C}ech cohomology in \cite{PIZCech}, with the purpose of calculating the \v{C}ech cohomology of the sheaf of locally constant functions to $\R$ and investigating the obstruction to extending the de Rham isomorphism of sheaf cohomology to the context of diffeological spaces.  We compare our definition with his in Subsection~\ref{ss:iz cohom}.

This paper is the first in many pushing the theory of sheaves into the diffeological realm.  Future work includes developing a satisfactory theory of stalks, spectral sequence techniques, non-abelian \v{C}ech cohomology, and the connections to the theory of diffeological groupoids (see Remark~\ref{r:groupoidrep} and \cite{vdS,Watts-diffeol-gpds}).

The sections of the paper are organized as follows: in Section~\ref{s:diffeology}, we give a brief overview of diffeological spaces along with various constructions of them that will be used throughout the paper.  In Section~\ref{s:sheaves}, we give a definition and some basic properties of sheaves over diffeological spaces.  In Section~\ref{s:cech}, we define \v{C}ech cohomology for sheaves over a diffeological space.  In Section~\ref{s:bundles}, we prove the main theorem of the paper: that principal $G$-bundles of an abelian diffeological group $G$ are classified by the degree 1 \v{C}ech cohomology of the sheaf of smooth $G$-valued functions.  We also show that there is a (non-natural) bijection between $\check{H}^1(X,G)$ and certain groupoid actions on the nebula of a sufficiently ``good'' generating family of $X$; see Corollary~\ref{c:contr gen fam}.  Finally, in Section~\ref{s:apps}, we give applications and some examples: we recover the classical picture in the case that $X$ is a smooth manifold and $G$ a Lie group, we look at the case of $X$ an irrational torus (an example not captured by the classical topological theory), and consider a relationship between group cohomology and principal $\RR$-bundles, as well as a potential generalization of the notion of ``fineness'' for a sheaf.  We also reproduce the classical result that \v{C}ech cohomology is the right derived functor of the global sections functor. This is combined with the results of the earlier subsections to obtain for a general irrational torus $T=\RR/\Gamma$ (where $\Gamma$ is a diffeologically discrete subgroup of $\RR$), the principal $T$-bundles over an orbit space of a proper Lie groupoid $X$ are classified by $\check{H}^2(X,\Gamma)$ (see Corollary~\ref{c:derived functor}).  This section ends with the aforementioned comparison with \cite{PIZCech}.

\section{Diffeological spaces}\labell{s:diffeology}

We start with the diffeological preliminaries required for this paper.

\begin{definition}[Diffeology]\labell{d:diffeology}
Let $X$ be a set.  A {\em parametrization} of $X$ is a
map of sets $p \colon U \to X$ where $U$ is an open subset of Euclidean space (no fixed dimension).  A {\em diffeology} $\calD_X$ on $X$ is a set of parametrizations satisfying the following three axioms:
\begin{enumerate}
\item {\em (Covering Axiom)} $\calD_X$ contains all constant maps into $X$.
\item {\em (Locality Axiom)} Given a parametrization $p\colon U\to X$, if there is an open cover $\{U_\alpha\}$ of $U$ and a family $\{p_\alpha\colon U_\alpha\to X\}\subseteq\calD_X$ such that $p|_{U_\alpha}=p_\alpha$ for each $\alpha$, then $p\in\calD_X$.
\item {\em (Smooth Compatibility Axiom)} Given $p\colon U\to X$ in $\calD_X$, an open subset $V$ of a Euclidean space, and a smooth map $f\colon V\to U$, the composition $p\circ f$ is in $\calD_X$.
\end{enumerate}
A set $X$ equipped with a diffeology $\calD_X$ is called a
{\em diffeological space}, and the parametrizations $p\in\calD_X$ {\em plots} of $X$.  In particular, there is a unique {\em empty plot} $\ep\colon\emptyset\to X$.  We will often denote the domain of a plot $p$ by $U_p$.

A map $\varphi\colon X\to Y$ between diffeological spaces $X$ and $Y$ is {\em (diffeologically) smooth} if $\varphi \circ p \in \mathcal{D}_Y$ for any plot $p \in \mathcal{D}_X$. If $\varphi$ is furthermore a bijection with smooth inverse, then it is a \emph{diffeomorphism}.  Denote the set of all smooth maps $X\to Y$ by $\CIN(X,Y)$.
\end{definition}

A first example of a diffeological space is a smooth manifold, equipped with the diffeology of all $C^\infty$ maps from Euclidean open sets into it.  In fact, smooth manifolds constitute a full subcategory of the category $\diffeol$ of diffeological spaces with smooth maps between them, and so diffeology is a generalization of the theory of smooth manifolds.  But that's not all: unlike smooth manifolds, $\diffeol$ is extremely versatile, being a complete, cocomplete quasi-topos \cite{BaezHoffnung}.  In particular, there are natural diffeological structures on subsets, quotients, products, sums, and function spaces defined as follows (see \cite{PIZ} for proofs that these satisfy the axioms of a diffeological space). 

\begin{definition}[Basic Categorical Constructions]\labell{d:categ constr}
Let $X$ be a diffeological space and $\{X_\alpha\}$ a family of diffeological spaces.
\begin{itemize}
\item Given a subset $Y\subset X$, the \emph{subset diffeology} on $Y$ is the collection of all plots of $X$ whose images lie in $Y$.
\item Given an equivalence relation $\sim$ on $X$ with quotient map $\pi$, the \emph{quotient diffeology} on $X/\!\sim$ is the collection of all parametrizations $p$ in which for each $u\in U_p$ there is an open neighborhood $V$ of $u$ and a plot $q\colon V\to X$ such that $p|_V=\pi\circ q$.
\item The \emph{product diffeology} on the set $\prod_\alpha X_\alpha$ is the collection of all parametrizations $p$ for which $\pr_\alpha\circ p$ is a plot of $X_\alpha$ for each $\alpha$, where $\pr_\alpha$ is the projection map.
\item The \emph{sum diffeology} on the set $\coprod_\alpha X_\alpha$ is the collection of all parametrizations $p$ in which for each $u\in U_p$ there is an open neighborhood $V$ of $u$ and a plot $q\colon V\to X_\alpha$ for some $\alpha$ such that $p|_V=q\circ i_\alpha$, where $i_\alpha$ is the inclusion map.
\item Given another diffeological space $Y$, the \emph{(standard) functional diffeology} on $\CIN(X,Y)$ is the collection of all parametrizations $p\colon U\to\CIN(X,Y)$ so that for all $(u,x)\in U\times X$, the assignment $(u,x)\mapsto p(u)(x)$ is smooth.
\end{itemize}
\end{definition}

Using the definitions above, one can construct limits and colimits, such as fiber products.

\begin{definition}[Fibered Product]\labell{d:fiber product}
Given a family of smooth maps $\varphi_\alpha\colon X_\alpha\to Z$, define the \emph{fibered product}  of $\{\varphi_\alpha\}$ to be the set 
$$\left\{(x_\alpha)\in \prod_\alpha X_\alpha\;\Big|\; \varphi_{\alpha_1}(x_{\alpha_1})=\varphi_{\alpha_2}(x_{\alpha_2})~\forall \alpha_1,\alpha_2\right\}$$
equipped with the subset diffeology induced by the product diffeology on $\prod_\alpha X_\alpha$.  This space is the unique diffeological space, up to diffeomorphism, satisfying the standard universal property of a fibered product.
\end{definition}

Just as one might use topological bases to generate a topology, diffeological spaces enjoy generating families.

\begin{definition}[Generating Families and Nebulae]\labell{d:nebula}
A {\em (covering) generating family} of a diffeological space $X$ is a family $\calQ\subseteq\calD_X$ such that for any plot $p\in\calD_X$ there exist an open cover $\{U_\alpha\}$ of $U_p$, a family $\{q_\alpha\colon V_\alpha\to X\}$ in $\calQ$, and a family of smooth maps $\{f_\alpha\colon U_\alpha\to V_\alpha\}$ such that $p|_{U_\alpha}=q_\alpha\circ f_\alpha$ for each $\alpha$.

The {\em nebula} of a generating family $\calQ$ is the diffeological sum $\Cn(\calQ):=\coprod_{q \in \calQ} U_q$. The sum includes a copy of the domain of each plot from $\calQ$; hence, multiple copies of the same Euclidean open set may appear in the sum.  The nebula comes equipped with a (smooth) \emph{evaluation map} $\ev\colon\calN(\calQ)\to X$ sending $u\in U_q$ to $q(x)$.
\end{definition}

Especially when discussing principal bundles later in this paper, we will need special classes of smooth injections and surjections between diffeological spaces.

\begin{definition}[Inductions \& Subductions]\labell{d:induction-subduction}
A smooth map $\varphi\colon X\to Y$ is an {\em induction} if it is injective, and a parametrisation $p$ of $X$ is a plot if $\varphi\circ p$ is a plot of $Y$.  Similarly, a smooth map $\varphi\colon X\to Y$ is a {\em subduction} if it is surjective, and for any plot $p\in\calD_Y$ there exist an open cover $\{U_\alpha\}$ of $U_p$ and plots $\{q_\alpha\colon U_\alpha\to X\}$ such that $p|_{U_\alpha}=\varphi\circ q_\alpha$ for each $\alpha$; call each $q_\alpha$ a \emph{local lift} of $p$ over $U_\alpha$.
\end{definition}

Examples of inductions include inclusion maps of subsets equipped with the subset diffeology, and examples of subductions include quotient maps to quotients equipped with the quotient diffeology.  In particular, the evaluation map of a generating family $\calQ$ of a diffeological space $X$ is a subduction, as $X$ is diffeomorphic to the quotient of $\calN(\calQ)$ by the equivalence relation $u_1\sim u_2$ if $\ev(u_1)=\ev(u_2)$.

\begin{example}[Generating Family Induced by a Subduction]\labell{x:nebula}
Let $\varphi\colon X\to Y$ be a subduction.  Let $p\colon U\to Y$ be a plot.  There exist an open cover $\{U_\alpha\}$ of $U$ and plots $q^p_\alpha\colon U_\alpha\to X$ such that $p|_{U_\alpha}=\varphi\circ q^p_\alpha$ for each $\alpha$.  The collection of all $\varphi\circ q^p_\alpha$ as $p$ runs over $\calD_Y$ and $\alpha$ over all open sets in a suitable open cover of $U_p$ is a generating family $\calQ$ of $Y$ with evaluation map $\ev$.  Moreover, there is a smooth map $\widetilde{\ev}\colon\calN(\calQ)\to X$, equal to $q^p_\alpha$ on each plot domain $U_{\varphi\circ q^p_\alpha}$, satisfying $\varphi\circ\widetilde{\ev}=\ev$; hence $\widetilde{\ev}$ is a lift of $\ev$ to $X$.
\end{example}

Finally, we define diffeological groups.

\begin{definition}\labell{d:diffeol gp}
A \emph{diffeological group} $G$ is a group equipped with a diffeology such that multiplication and inversion are smooth.
\end{definition}

\begin{example}\labell{x:diffeol gp}
Any Lie group is a diffeological group.  The irrational torus $\RR/(\ZZ+\alpha\ZZ)$ is a diffeological group, but not a Lie group.  Here $\alpha$ is an irrational number, and $\ZZ+\alpha\ZZ$ the diffeologically discrete subgroup of $\RR$.  The group of diffeomorphisms $\Diff(X)$ of a diffeological space is a diffeological group when equipped with an appropriate functional diffeology (see \cite{PIZ} for details).
\end{example}

\begin{example}\labell{x:functions to gp}
Let $X$ be a diffeological space and $G$ a diffeological group.  The functional diffeology on $\CIN(X,G)$ admits a smooth pointwise multiplication and inversion map, making it into a diffeological group.
\end{example}

\section{Sheaves over a diffeology}\labell{s:sheaves}

In this section we introduce presheaves and sheaves of a diffeological space, their global sections, morphisms, and prove that our definitions behave well under pullback.    The novelty of our approach is that we replace the category of open sets of a topological space with inclusions as arrows, on which a classical sheaf is defined, with the category of plots (defined below) of a diffeological space.  While this allows us to obtain results not available in the topological category, any sheaf on a smooth manifold according to our definition below yields a classical sheaf on the open sets of the manifold.  We end the section with a result familiar from classical sheaf theory: that taking global sections is a right exact functor (see Proposition~\ref{p:ses}).

\begin{definition}[Category of Plots]\labell{d:categ plots}
Given a diffeological space $X$, {\em the category of plots} $\calD(X)$ is the category comprising plots of $X$ as objects and commutative triangles
\[\xymatrix@R=0.5em{U \ar[dr]^{p} \ar[dd]_f & \\ & X \\ V\ar[ur]_q & }\] 
as morphisms, where $p$ and $q$ are plots and $f\colon U\to V$ is smooth.  For brevity, designate such a commutative triangle as $f_p^q\colon p\to q$. 
\end{definition}

\begin{definition}[$\calC$-Valued Presheaf]\labell{d:presheaf}
Fix a diffeological space $X$.  Let $\Cc$ be the category of sets $\set$, groups $\grp$, abelian groups $\ab$, or vector spaces $\vect$.  A {\em $\Cc$-valued presheaf over $X$} is a functor $F\colon\op{\calD(X)} \to \Cc$.  A {\em global section} of a $\calC$-valued presheaf $F\colon\op{\calD(X)}\to\calC$ is a family of objects $\{\eta_p\in F(p)\mid p\in\calD_X\}$ which is natural with respect to arrows of $\calD(X)$; \emph{i.e.}\ $F(f_p^q)(\eta_q)=\eta_p$ for every arrow $f_p^q\in\calD(X)$; we sometimes refer to such a family as a \emph{coherent family}. Denote the set of global sections of $F$ by $F(X)$.
\end{definition}

Global sections of a $\calC$-valued presheaf $F$ form an object in $\calC$ via plotwise-defined operations, emulating what one would expect from the classical case.  For instance, if $\calC$ is $\grp$, and $\{\eta_p\}$ and $\{\mu_p\}$ are two global sections of $F$, then $\{\eta_p\}\{\mu_p\}=\{\eta_p\mu_p\}$.

While the value $F(p)$ is determined by the plot $p$, we are often interested in the case where the value of $F$ is determined only by the domain of $p$.

\begin{definition}[Domain-Determined]\labell{d:domain-determined}
A $\Cc$-valued presheaf $F\colon \op{\Cd(X)}\to \Cc$ is {\em domain-determined} if $F(p)=F(q)$ for any two plots $p,q$ with $U_p=U_q$; and $F(f_p^q)$ is uniquely determined by $f$, for any smooth arrow $f_p^q$ of $\calD(X)$.
\end{definition}

Given a topological space $Z$, let $\open(Z)$ be the category of open subsets of $Z$ with inclusions as arrows.  For a presheaf $F\colon\op{\Cd(X)}\to \Cc$ and plot $p\colon U\to X$, an inclusion of an open subset $\iota\colon V\to U$ induces a map $\iota_{p\circ \iota}^p\colon p|_V\to p$, from which it follows that for each plot $p\colon U\to X$, one obtains a presheaf in the classical sense
$F|_p\colon\op{\open(U)}\to \Cc$ sending $V\subseteq U$ to $F(p\circ \iota)$.

\begin{definition}[$\calC$-Valued Sheaf]\labell{d:sheaf}
A {\em $\calC$-valued sheaf} $F\colon\op{\Cd(X)}\to \Cc$ over a diffeological space $X$ is a $\calC$-valued presheaf that sends the empty plot $\ep$ to the terminal object of $\calC$, and whose restriction $F|_p:\op{\open(U)}\to \Cc$ is a sheaf for every plot $p\colon U\to X$.
\end{definition}

\begin{remark}\labell{r:coprods}
Our requirement that $F$ takes $\ep$ to the terminal object allows us to conclude that any plot $p\colon U\amalg V \to X$ satisfies $F(p)=F(p|_U)\times F(p|_V)$.  It follows that a sheaf $F$ on $X$ is completely determined by its values on plots with connected domains, and so it is generally enough to consider such plots.
\end{remark}

Any $\calC$-valued sheaf on a smooth manifold $M$ yields a sheaf in the classical sense.  For certain sheaves such as $C^\infty(\,\cdot\,,Y)$ below, the converse is immediate.

\begin{examples}\labell{x:sheaves}
Let $X$ and $Y$ be diffeological spaces. 
Three important examples of sheaves over diffeological spaces include:
\begin{enumerate}
\item For each degree $k\geq0$, denote by $\Omega^k\colon\op{\calD(X)}\to \vect$ the domain-determined sheaf given by $\Omega^k(p):=\Omega^k(U_p)$ and $\Omega^k(f_p^q)(\eta):=f^*\eta$.  The global sections of this sheaf $\Omega^k(X)$ are precisely the de Rham $k$-forms on $X$.

\item Denote by $C^\infty(\,\cdot\,, Y)\colon\op{\calD(X)}\to \set$ the domain-determined sheaf given by $C^\infty(p,Y):=C^\infty(U_p,Y)$ and $C^\infty(f_p^q)(g):=g\circ f$ for any $g\in C^\infty(U_q,Y)$.  The global sections of this sheaf $\CIN(X,Y)$ are precisely the smooth maps from $X$ to $Y$.  In particular, when $Y$ is an abelian diffeological group $G$, the result is a sheaf $C^\infty(\,\cdot\,,G)\colon\op{\calD(X)}\to \ab$ with global sections the group of smooth maps $C^\infty(X,G)$.

\item For a smooth map $\varphi\colon X\to Y$,  denote by $\Gamma(\cdot, \varphi)\colon\op{\calD(Y)} \to \set$ the sheaf given by $$\Gamma(p,\varphi):=\{\widetilde{p}\in C^\infty(U_p,X)\mid \varphi\circ \widetilde{p} =p\}$$ (\emph{i.e.}\ the set of lifts of plots of $Y$ to plots of $X$) and $\Gamma(f_q^p,\varphi)(\widetilde{p}):=\widetilde{p}\circ f$.  This sheaf is \emph{not} domain-determined.  The global sections of $\Gamma(X,\varphi)$ are precisely the smooth maps $\psi\colon Y\to X$ with $\varphi\circ \psi = \id_Y$.  In particular, global sections need not exist!
\end{enumerate}
\end{examples}

\begin{remark}\labell{r:sheaves}
Denote by $\euc$ the site of open subsets of Euclidean spaces with smooth maps between them and standard open covers.  There is a forgetful functor $\pi\colon\sl\calD(X)\to\euc$ sending a plot to its domain and $f_p^q$ to $f$.
In the language of sheaves over sites, a domain-determined $\Cc$-valued sheaf $F\colon\op{\calD(X)}\to \Cc$ is the pushforward of a sheaf $F'\colon\op{\euc}\to \Cc$ via the forgetful functor $\pi\colon\calD(X)\to \euc$.  Furthermore, as $\calD(X)$ itself is an instance of (the Grothendieck construction associated to) a sheaf of sets on $\euc$, global sections of $F$ can be identified with  maps of sheaves $\alpha\colon\calD(X)\Rightarrow F'$.  Of course, this is no longer true in the case of a sheaf that is not domain-determined.
\end{remark}

Our goal is to prove some basic properties of sheaves that one may expect from the classical case.  First, a definition.

\begin{definition}[Pullback Presheaf]\labell{d:pullback preshf}
Let $\varphi\colon X\to Y$ be a smooth map and $F\colon\op{\Cd(Y)}\to \Cc$ a $\Cc$-valued presheaf.  The {\em pullback presheaf $\varphi^*F\colon\op{\Cd(X)}\to \Cc$} is the presheaf with $\varphi^*F(p):=F(\varphi\circ p)$ on objects and $\varphi^*F(f_p^q):=F(f_{\varphi\circ p}^{\varphi\circ q})$ on morphisms.
\end{definition}

\begin{proposition}\labell{p:pullbackpracticalities}
Let $\varphi\colon X\to Y$ and $\psi\colon Y\to Z$ be smooth maps and $F\colon \op{\Cd(Y)}\to \Cc$, $G\colon\op{\calD(Z)}\to\calC$ presheaves.
\begin{enumerate}
	\item $\varphi$ induces a morphism $\widetilde{\varphi}\colon F(Y)\to \varphi^*F(X)$
	\item If $F$ is a sheaf over $Y$, then $\varphi^*F$ is a sheaf over $X$.
	\item $\varphi^*(\psi^*(G))=(\psi\circ \varphi)^*G$, and $\widetilde{\varphi}\circ \widetilde{\psi}=\widetilde{\varphi\circ \psi}$.
	\item If $F$ is domain-determined, then $\varphi^*F(p)=F(p)$.
	\item\label{i:functionsheavespb} For a diffeological space $W$, $\varphi^*C^\infty(\cdot ,W)=C^\infty(\cdot,W)$ and the induced map $\widetilde{\varphi}\colon C^\infty(Y,W)\to C^\infty(X,W)$ is given by precomposition by $\varphi$.
\end{enumerate}
\end{proposition}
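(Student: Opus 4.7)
The plan is to verify each of the five parts by carefully unwinding the definitions of the pullback presheaf, coherent families, and domain-determined presheaves. I would handle them in the listed order, since (5) builds on (4) and (1), and (3) is essentially formal.

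For (1), I would define $\widetilde{\varphi}$ on a coherent family $\{\eta_p\}_{p \in \calD_Y}$ by $\widetilde{\varphi}(\{\eta_p\})_q := \eta_{\varphi \circ q}$ for $q \in \calD_X$; this lands in $F(\varphi \circ q) = \varphi^*F(q)$. Coherence of the resulting family with respect to an arrow $f_q^{q'}$ in $\calD(X)$ reduces to the coherence of $\{\eta_p\}$ under the arrow $f_{\varphi \circ q}^{\varphi \circ q'}$ in $\calD(Y)$, and $\widetilde{\varphi}$ is automatically a morphism in $\calC$ because the operations on global sections are defined plotwise.

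For (2), I would check the two sheaf axioms for $\varphi^*F$. The empty plot condition is immediate since $\varphi$ composed with the empty plot of $X$ is the empty plot of $Y$, so $\varphi^*F(\ep) = F(\ep)$ is terminal in $\calC$. For the classical-sheaf condition on $\varphi^*F|_p$ for a plot $p \colon U \to X$, the key observation is that for an open inclusion $\iota \colon V \hookrightarrow U$ one has $\varphi \circ p \circ \iota = (\varphi \circ p)|_V$, so $\varphi^*F|_p$ coincides with $F|_{\varphi \circ p}$ as presheaves on $\open(U)$; since the latter is a classical sheaf by hypothesis, so is the former. Part (3) is pure definition-chasing: both sides of $\varphi^*(\psi^*G) = (\psi \circ \varphi)^*G$ evaluate to $G(\psi \circ \varphi \circ p)$ on a plot $p$, and analogously on morphisms; similarly $\widetilde{\varphi} \circ \widetilde{\psi}$ and $\widetilde{\varphi \circ \psi}$ both send a coherent family $\{\eta_p\}_{p \in \calD_Z}$ to the family indexed by $q \in \calD_X$ with $q$-th entry $\eta_{\psi \circ \varphi \circ q}$.

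For (4), domain-determination of $F$ gives $F(\varphi \circ p) = F(p)$, since $U_{\varphi \circ p} = U_p$; a small subtlety is that $p$ is a plot of $X$ and not of $Y$, so this equality is most cleanly read through the pushforward sheaf $F'$ on $\euc$ from Remark \ref{r:sheaves}, where both sides become $F'(U_p)$. The restriction map $\varphi^*F(f_p^q) = F(f_{\varphi \circ p}^{\varphi \circ q})$ is then determined by $f$ alone, as required. Part (5) is an immediate consequence of (4) applied to the domain-determined sheaf $C^\infty(\cdot, W)$; the description of $\widetilde{\varphi}$ as precomposition by $\varphi$ then follows from (1) by identifying a global section of $C^\infty(\cdot, W)$ over $Y$ with the smooth map $\eta \colon Y \to W$ assembled from the coherent family, since $\widetilde{\varphi}(\eta)_q = \eta \circ \varphi \circ q$ is precisely the coherent family corresponding to $\eta \circ \varphi \in C^\infty(X, W)$. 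The main obstacle, if any, is the notational care in (4) required to recognize that although $F$ lives on $\calD(Y)$, applying it to the $X$-plot $p$ via $F(p) := F(\varphi \circ p)$ is legitimate precisely because $F$ factors through the domain functor $\pi$; once this identification is accepted, the rest of the proposition reduces to routine bookkeeping.
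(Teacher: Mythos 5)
Your proposal is correct and follows essentially the same route as the paper: the same formula $\widetilde{\varphi}(\{\eta_p\})_q = \eta_{\varphi\circ q}$ for part (1), the same identification $\varphi^*F|_p = F|_{\varphi\circ p}$ on $\open(U_p)$ for part (2), with the remaining parts handled by the definition-unwinding the paper dismisses as straightforward. Your extra care in (4) about reading $F(p)$ for an $X$-plot $p$ through the pushforward $F'$ on $\euc$ is a worthwhile clarification, not a deviation.
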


\begin{proof}
Given a global section $\{\eta_p\}_{p\in \Cd(Y)}$, define $\widetilde{\varphi}(\{\eta_p\}_{p\in \Cd(Y)}):=\{\eta_{\varphi\circ q}\}_{q\in \Cd(X)}$.  Naturality follows from the definition.  This proves the first claim.

Let $p\in\calD_X$.  The restrictions $F|_{\varphi\circ p}$ and $\varphi^*F|_p$ over $\open(U_p)$ are identical presheaves.  As the former is a sheaf over $\open(U_p)$, so must be the latter.  This proves the second claim.

The remaining claims are straightforward to check.
\end{proof}

\begin{remark}\labell{r:domain-determined}
With respect to the setup and notation introduced in Remark~\ref{r:sheaves}, given a smooth map $\varphi\colon X\to Y$ and a domain-determined sheaf $F\colon\op{\calD(Y)}\to\calC$, we have that $\varphi^*F(p)=F'(U)$ for any $p\in\calD_X$.  Furthermore, the induced map between global sections is realized in this setting as the correspondence $$(\alpha\colon\calD(Y)\Rightarrow F')\mapsto(\alpha\circ\tilde{\varphi}\colon\calD(X)\Rightarrow F')$$ where $\tilde{\varphi}$ is the map of sheaves induced by $\varphi$.
\end{remark}

Next we define morphisms of sheaves, and what a short exact sequence of these is.


\begin{definition}[Morphism of Sheaves]\labell{d:morphsheaves}
Let $X$ be a diffeological space and $F,G\colon\op{\calD(X)}\to\calC$ sheaves.  A \emph{morphism of sheaves} $\Phi\colon F\Rightarrow G$ is a natural transformation between $F$ and $G$.  If $\calC$ is $\grp$, $\ab$, or $\vect$, then such a morphism of sheaves induces 
\begin{itemize}
	\item a morphism between global sections $\widetilde{\Phi}\colon F(X)\to G(X)$ sending a coherent family $\{\eta_p\}_{p\in\calD_X}$ to a coherent family $\{\Phi_p(\eta_p)\}_{p\in\calD_X}$;
	\item a \emph{kernel presheaf} $\ker\Phi\colon\op{\calD(X)}\to\calC$ defined by $\ker\Phi(p):=\ker(\Phi(p))$ for each plot $p\in\calD_X$, and $\ker\Phi(f_p^q):=F(f)|_{\ker\Phi(q)}$;
	\item an \emph{image presheaf} $\im\Phi\colon\op{\calD(X)}\to\calC$ defined by $\im\Phi(p):=\im(\Phi(p))$ for each plot $p\in\calD_X$, and $\im\Phi(f_p^q):=G(f)|_{\im(\Phi(q))}$.
	\item For any smooth $\varphi\colon Y\to X$, a morphism of sheaves $\varphi^*\Phi\colon\varphi^*F\Rightarrow\varphi^*G$.
\end{itemize}
\end{definition}

\begin{remark}\labell{r:morphsheaves}
Given a morphism of sheaves $\Phi\colon F\Rightarrow G$ as in Definition~\ref{d:morphsheaves}, the presheaves $\ker\Phi$ and $\im\Phi$ both send the empty plot $\ep$ to the terminal object of $\calC$.  Moreover, the kernel presheaf of a morphism of sheaves in the classical sense is in fact a sheaf, and so  $\ker\Phi$ is a sheaf in the diffeological sense.  However, in general, the image presheaf is not a sheaf, just as it is not in the classical theory.  This said, if $p$ is a plot, $U_p=\amalg_\alpha U_\alpha$, and $\mu=\Phi_p(\eta)$, then it follows from the naturality of $\Phi$ that $\mu=\sum_\alpha\Phi_{p_\alpha}(\eta_\alpha)$ where $i_\alpha\colon U_\alpha\to U$ is the inclusion, $p_\alpha=i_\alpha^*p$, and $\eta_\alpha=i_\alpha^*\eta$.  That is, image presheaves are completely determined by their values on plots with connected domains (\emph{cf}.\ Remark~\ref{r:coprods}).
\end{remark}

We follow Raeburn and Williams \cite{RaeburnWilliams} for our definition of a short exact sequence of sheaves, adapting to our diffeological context.

\begin{definition}[Short Exact Sequence]\labell{d:ses}
Let $X$ be a diffeological space and $F$, $G$, and $H$ be sheaves over $X$ with morphisms $\Phi\colon F\Rightarrow G$ and $\Psi\colon G\Rightarrow H$.  The sequence $$0 \Rightarrow F \overset{\Phi}{\Rightarrow} G \overset{\Psi}{\Rightarrow} H \Rightarrow 0$$
is a \emph{short exact sequence} if 
	\begin{enumerate}
		\item $\ker\Phi_p=0$ for each $p\in\calD_X$,
		\item $\ker\Psi_p=\im\Phi_p$ for each $p\in\calD_X$, and
		\item for every $p\in\calD_X$, $\eta\in H(p)$, and $u\in U_p$, there exists an open neighbourhood $V\subseteq U_p$ of $u$ such that $i_V^*\eta\in\im\Psi_{p|_V}$ where $i_V$ is the inclusion.
	\end{enumerate}
\end{definition}

The following lemma is straightforward to check, following from the definitions.

\begin{lemma}\labell{l:ses}
Let $\varphi\colon X\to Y$ be a smooth map, and $$0 \Rightarrow F \overset{\Phi}{\Rightarrow} G \overset{\Psi}{\Rightarrow} H \Rightarrow 0$$ a short exact sequence of sheaves of $Y$.  Then $$0 \Rightarrow \varphi^*F \overset{\varphi^*\Phi}{\Rightarrow} \varphi^*G \overset{\varphi^*\Psi}{\Rightarrow} \varphi^*H \Rightarrow 0$$ is a short exact sequence of sheaves over $X$. 
\end{lemma}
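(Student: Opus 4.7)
The plan is to verify the three conditions in Definition~\ref{d:ses} directly from the definition of the pullback. The key observation, which makes the whole argument essentially a matter of unwinding notation, is that for any plot $p\in\calD_X$ the composite $\varphi\circ p$ is a plot of $Y$ with the same domain $U_p=U_{\varphi\circ p}$, and by Definition~\ref{d:pullback preshf} together with Definition~\ref{d:morphsheaves} we have $\varphi^*F(p)=F(\varphi\circ p)$, $(\varphi^*\Phi)_p=\Phi_{\varphi\circ p}$, and similarly for $G$, $H$, and $\Psi$. Note that by Proposition~\ref{p:pullbackpracticalities}(2), the three pulled-back presheaves are indeed sheaves over $X$, so it remains only to check exactness.

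For condition (1), I would simply observe that $\ker(\varphi^*\Phi)_p=\ker(\Phi_{\varphi\circ p})=0$, using condition (1) of the original short exact sequence applied to the plot $\varphi\circ p\in\calD_Y$. Condition (2) is just as quick: $\ker(\varphi^*\Psi)_p=\ker(\Psi_{\varphi\circ p})=\im(\Phi_{\varphi\circ p})=\im(\varphi^*\Phi)_p$, using condition (2) of the original sequence.

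For condition (3), fix $p\in\calD_X$, $\eta\in\varphi^*H(p)=H(\varphi\circ p)$, and $u\in U_p=U_{\varphi\circ p}$. Applying condition (3) of the original short exact sequence to the plot $\varphi\circ p\in\calD_Y$ yields an open neighbourhood $V\subseteq U_p$ of $u$ and some $\zeta\in G((\varphi\circ p)|_V)$ with $\Psi_{(\varphi\circ p)|_V}(\zeta)=i_V^*\eta$. Since $(\varphi\circ p)|_V=\varphi\circ(p|_V)$, we may read this as $\zeta\in\varphi^*G(p|_V)$ and $(\varphi^*\Psi)_{p|_V}(\zeta)=i_V^*\eta$, which is condition (3) for the pulled-back sequence.

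There is no real obstacle here; the verification is formal once one has matched up the definitions. The only thing to be careful about is the identification $(\varphi\circ p)|_V=\varphi\circ(p|_V)$ when invoking condition (3), which ensures that the witness $\zeta$ found in $G$ for the original sequence lives in the correct group $\varphi^*G(p|_V)=G(\varphi\circ p|_V)$ for the pulled-back sequence.
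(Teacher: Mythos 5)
Your verification is correct and is precisely the ``straightforward check, following from the definitions'' that the paper asserts without writing out: the paper gives no proof of Lemma~\ref{l:ses}, and your unwinding of $(\varphi^*\Phi)_p=\Phi_{\varphi\circ p}$ together with the identification $(\varphi\circ p)|_V=\varphi\circ(p|_V)$ for condition (3) is exactly the intended argument.
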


We also immediately obtain a familiar result from classical sheaf theory; namely, taking global sections is right exact.

\begin{proposition}\labell{p:ses}
Let $X$ be a diffeological space.  A short exact sequence of sheaves over $X$ $$0\Rightarrow F\overset{\Phi}{\Rightarrow} G\overset{\Psi}{\Rightarrow} H\Rightarrow 0$$ induces an exact sequence $0\to F(X)\overset{\widetilde{\Phi}}{\longrightarrow} G(X)\overset{\widetilde{\Psi}}{\longrightarrow} H(X)$.
\end{proposition}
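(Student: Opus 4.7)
My plan is to unpack the claim into its three pieces: injectivity of $\widetilde{\Phi}$, the inclusion $\im\widetilde{\Phi}\subseteq\ker\widetilde{\Psi}$, and the reverse inclusion $\ker\widetilde{\Psi}\subseteq\im\widetilde{\Phi}$. The first two are essentially formal, while the third is where the work (such as it is) happens. Notably, condition (3) of Definition~\ref{d:ses}, about local surjectivity of $\Psi$, plays no role here, as expected in the classical story: global sections is only right exact.

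For injectivity, I would take a coherent family $\{\eta_p\}\in F(X)$ with $\widetilde{\Phi}(\{\eta_p\})=0$, meaning $\Phi_p(\eta_p)=0$ for every plot $p$. By condition (1) of Definition~\ref{d:ses} each $\Phi_p$ is injective, so $\eta_p=0$ plotwise and hence $\{\eta_p\}=0$ in $F(X)$. For $\im\widetilde{\Phi}\subseteq\ker\widetilde{\Psi}$, condition (2) gives $\im\Phi_p\subseteq\ker\Psi_p$, so $\Psi_p\circ\Phi_p=0$ plotwise, whence $\widetilde{\Psi}\circ\widetilde{\Phi}=0$.

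The main step is the reverse inclusion. Given $\{\mu_p\}\in G(X)$ with $\widetilde{\Psi}(\{\mu_p\})=0$, condition (2) says $\mu_p\in\ker\Psi_p=\im\Phi_p$ for each plot $p$, so I can choose $\eta_p\in F(p)$ with $\Phi_p(\eta_p)=\mu_p$; condition (1) makes this choice unique. The content is verifying that $\{\eta_p\}$ is coherent. For any arrow $f_p^q\colon p\to q$ in $\calD(X)$, naturality of $\Phi$ gives
\[
\Phi_p\bigl(F(f_p^q)(\eta_q)\bigr)=G(f_p^q)\bigl(\Phi_q(\eta_q)\bigr)=G(f_p^q)(\mu_q)=\mu_p=\Phi_p(\eta_p),
\]
where the third equality uses coherence of $\{\mu_p\}$. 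Injectivity of $\Phi_p$ then forces $F(f_p^q)(\eta_q)=\eta_p$, so $\{\eta_p\}\in F(X)$, and by construction $\widetilde{\Phi}(\{\eta_p\})=\{\mu_p\}$.

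The only thing to watch is the compatibility argument in the last step, but even that is routine once the uniqueness of preimages under $\Phi_p$ is in hand; there is no real obstacle, and in particular no passage to local data is needed, which is why condition (3) does not appear. The proof is short and should take only a few lines when written out.
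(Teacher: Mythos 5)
Your proposal is correct and follows essentially the same route as the paper's proof: injectivity from condition (1), $\widetilde{\Psi}\circ\widetilde{\Phi}=0$ from condition (2), and the kernel-equals-image step by choosing plotwise preimages and verifying coherence via naturality of $\Phi$, coherence of $\{\mu_p\}$, and injectivity of each $\Phi_p$. You merely write out explicitly the coherence computation that the paper states in one sentence, and your observation that condition (3) is unused matches the paper's treatment.
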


\begin{proof}
Since $\ker\Psi_p=\im\Phi_p$ for each $p\in\calD_X$, it follows that $\widetilde{\Psi}\circ\widetilde{\Phi}=0$.  Injectivity of $\widetilde{\Phi}$ follows from the fact that $\ker\Phi_p=0$ for each $p\in\calD_X$.  Suppose $\{\mu_p\}\in\ker\widetilde{\Psi}$.  For each $p\in\calD_X$, $\Psi_p(\mu_p)=0$.  Since $\ker\Psi_p=\im\Phi_p$ for each $p$, there exists $\eta_p\in F(p)$ such that $\mu_p=\Phi_p(\eta_p)$.  Coherence of $\{\eta_p\}$ follows from $\Phi$ being a natural transformation, coherence of $\{\mu_p\}$, and injectivity of $\Phi_p$ for each $p$.  Thus $\{\mu_p\}=\widetilde{\Phi}(\{\eta_p\})$.
\end{proof}

\begin{example}\labell{x:morphsheaves}
Let $X$ be a diffeological space and $\alpha\colon A\to B$ a smooth homomorphism of abelian diffeological groups.  Then $\alpha$ induces a morphism of sheaves $\alpha_\sharp\colon\CIN(\cdot,A)\Rightarrow\CIN(\cdot,B)$.  This, in turn, induces a morphism of global sections $\widetilde{\alpha}\colon\CIN(X,A)\to\CIN(X,B)$.  Suppose $\alpha$ is an induction, and $\beta\colon B\to C$ is a subductive smooth homomorphism of abelian diffeological groups such that $$0\to A\underset{\alpha}{\longrightarrow} B\underset{\beta}{\longrightarrow} C\to 0$$ is a short exact sequence of groups.  It is straightforward to check that $\ker(\alpha_\sharp)_p=0$ for each $p\in\calD_X$.  Since $\alpha$ is an induction, it follows that $\ker(\beta_\sharp)_p=\im(\alpha_\sharp)_p$ for each $p\in\calD_X$.  Since $\beta$ is a subduction, for any $p\in\calD_X$, for any $h\in\CIN(U_p,C)$, and any $u\in U_p$, there exist an open neighbourhood $V$ of $u$ and $g\in\CIN(V,B)$ such that $h|_V=\beta\circ g$; that is, $i_V^*h\in\im(\beta_\sharp)_{p|_V}$ where $i_V$ is the inclusion.  It now follows from Proposition~\ref{p:ses} that $$0\to\CIN(X,A)\to\CIN(X,B)\to\CIN(X,C)$$ is exact.
\end{example}

\section{\v{C}ech cohomology for sheaves over diffeologies}\labell{s:cech}

In this section, we describe a model for \v{C}ech cohomology for (pre)sheaves of abelian groups over a fixed diffeological space.  Similar to classical \v{C}ech cohomology, we will first take the cohomology of a cochain complex associated to a specific generating family of the diffeological space in question and then take a colimit over the set of all generating families up to refinement.  

\begin{definition}\labell{d:k-fold nebula}
Let $X$ be a diffeological space and $\calQ$ a generating family.  For all $k\geq 1$, denote by $\Cn(\calQ)_k$ the $k$-fold fiber product
\[\Cn(\calQ)_k:=\{(x_0,\ldots, x_k)\in \Cn(\calQ)^{k+1}\,|\,\ev(x_0)=\ev(x_1)=\ldots=\ev(x_k)\}\]
endowed with the $k$-fold fiber product diffeology (see Definition~\ref{d:fiber product}).

For each $k\geq 1$, $\Cn(\calQ)_k$ comes with $k+1$ (smooth) {\em degeneracy maps} $d_i\colon\Cn(\calQ)_k\to \Cn(\calQ)_{k-1}$ defined by $d_i(x_0,\ldots, x_k):=(x_0,\ldots, \widehat{x_i},\ldots, x_k)$ (\emph{i.e.}\ the $i^{\mathrm{th}}$ factor is dropped).

For each $k\geq 0$, let $\ev_k\colon\Cn(\calQ)_k\to X$ be the (smooth) map $(x_0,\ldots, x_k)\mapsto \ev(x_0)$.
\end{definition}

\begin{remark}\labell{r:factorizations}
Plots of $\Cn(\calQ)_{k}$ are $(k+1)$-tuples $(f_0,\ldots,f_k)$ of plots of $\Cn(\calQ)$ satisfying $p:=\ev\circ f_0=\dots=\ev\circ f_k$. It follows that the tuple $(f_0,\ldots,f_k)$ is a choice of $k+1$ factorizations of $p$ through $\Cn(\calQ)$. In fact, any $k+1$ factorizations of a plot of $X$ through elements of $\calQ$ give rise to a plot of $\calN(\calQ)_k$ in this way.  Moreover, $\Cn(\calQ)$ is equipped with the sum diffeology giving it the structure of a local manifold, and so any plot $f\colon U\to \Cn(\calQ)$ in which $U$ is connected has image contained in a single plot domain $U_q$ of a plot $q$ in $\calQ$.  Hence, in the case that $U$ is connected, the $k+1$ factorizations of $p$ above are factorizations through elements of $\calQ$: if each $f_i$ has image in $U_{q_i}$, then we have the following commutative diagram.
\[\xymatrix{& & & &  U \ar[dd]^{p} \ar[dllll]_{f_0} \ar[dlll]^{f_{1}} \ar[dl]^{f_k} \\ U_{q_0}\ar[drrrr]_{q_0}  & U_{q_1} \ar[drrr]^{q_1} & \ldots & U_{q_k} \ar[dr]^{q_k} &  \\ & & & &   X}\]
\end{remark}

Define the \v{C}ech cochain complex associated to a generating family as follows:

\begin{definition}[\v{C}ech Cohomology]\labell{d:cech}
Let $X$ be a diffeological space, $\calQ$ a generating family for $X$, and $F$ a presheaf of abelian groups on $X$.  A {\em degree $k$-cochain of $F$ associated to $\calQ$} is a global section of the sheaf $\ev_k^*F\colon\op{\calD(\Cn(\calQ)_k)}\to \ab$.  Denote by $\check{C}^k(\calQ,F)$ the group of all such global sections.  The degeneracy maps $d_i$ induce the following maps on global sections (see Proposition~\ref{p:pullbackpracticalities}):
$$\widetilde{d}_i\colon\ev_k^*F(\calN(\calQ)_k)\to d_i^*(\ev_k^*F)(\calN(\calQ)_{k+1})=\ev_{k+1}^*F(\calN(\calQ)_{k+1}).$$
The map $\del\colon\check{C}^k(\calQ,F)\to \check{C}^{k+1}(\calQ,F)$ defined as $\del:=\sum_{i=0}^{k+1} (-1)^{i}\widetilde{d}_i$ satisfies $\del^2=0$, and thus is a coboundary operator making $(\check{C}^*(\calQ,F),\del)$ a cochain complex, the \emph{\v{C}ech cochain complex associated to $\calQ$}.
The resulting cohomology $\check{H}^\bullet(\calQ,F)$ is the \emph{\v{C}ech cohomology for $F$ associated to $\calQ$}.
\end{definition}

\begin{remark}\labell{r:cech}
Suppose $F$ is a sheaf.  In the context of Remark~\ref{r:factorizations} and setting of Definition~\ref{d:cech}, if $(f_0,\ldots,f_k)$ is a $(k+1)$-tuple of factorizations of a plot $p\in\calD_X$ with connected domain through elements of $\calQ$ (and hence is a plot of $\calN(\calQ)_{k+1}$), then $\ev_k^*F(f_0,\ldots,f_k)=F(p)$.  As $F$ is determined by its values over plots of $X$ with connected domains (see Remarks~\ref{r:coprods} and \ref{r:factorizations}), $\check{C}^k(\calQ,F)$ is determined by coherent families of objects over the $(k+1)$-tuples of factorizations of each of these plots through elements of $\calQ$.  In terms of tuples of factorizations, $\del$ is realized as the map
\begin{equation}\label{e:cech}
\left\{\eta_{(f_0,\ldots,f_k)}\right\}_{\Cd(\Cn(\calQ)_k)_0}\mapsto \left\{\sum_{i=0}^{k+1}(-1)^i \eta_{(f_0,\ldots, \hat{f}_i,\ldots f_{k+1})}\right\}_{\calD(\calN(\calQ)_{k+1})_0}.
\end{equation}
\end{remark}

For sheaves of smooth functions to an abelian diffeological group, \v{C}ech cohomology reduces to a simple form.

\begin{example}\labell{x:mappingsheaves}
Let $G$ be an abelian diffeological group and $C^\infty(\cdot ,G)\colon\op{\Cd(X)}\to \ab$ the domain-determined sheaf of smooth maps defined in the second example of Examples~\ref{x:sheaves}.  By Item~\ref{i:functionsheavespb} of Proposition~\ref{p:pullbackpracticalities}, for any choice of generating family $\calQ$ of $X$, the sheaf of smooth functions on $X$ pulls back to the sheaf of smooth functions on $\calN(\calQ)_k$; that is, $\ev_k^*C^\infty(\cdot,G)=C^\infty(\cdot,G)$.  Furthermore, the maps $\widetilde{d}_i\colon \ev_k^*C^\infty(\calN(\calQ)_k ,G)\to \ev_{k+1}^*C^\infty(\calN(\calQ)_{k+1} ,G)$ are given by precomposition with $d_i$.  

The $k$-cochains $\check{C}^k(\calQ,G):=\check{C}^k(\calQ,C^\infty(\cdot, G))$ thus reduce to smooth maps $C^\infty(\Cn(\calQ)_k,G)$ and $\partial$ reduces to 
\[\partial f:=\sum_{i=0}^{k+1}(-1)^k f\circ d_i\]
for $f\in\check{C}^k(\calQ,G)$.
\end{example}

The degree zero \v{C}ech cohomology associated to {\em any} generating family yields the global sections of a sheaf $F$.

\begin{proposition}\labell{p:deg-0}
Let $X$ be a diffeological space, $\calQ$ a generating family, and $F$ a sheaf of abelian groups on $X$.  Then $\check{H}^0(\calQ,F)=F(X)$.
\end{proposition}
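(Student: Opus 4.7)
The plan is to exhibit explicit mutually inverse maps between $F(X)$ and $\ker(\partial\colon\check{C}^0(\calQ,F)\to\check{C}^1(\calQ,F))$. Unpacking definitions, a $0$-cochain is a coherent family $\{\eta_f\}$ indexed by plots $f$ of $\calN(\calQ)$ with $\eta_f\in F(\ev\circ f)$, and $\partial\eta=0$ reads $\eta_{f_0}=\eta_{f_1}$ whenever $(f_0,f_1)$ is a plot of $\calN(\calQ)_1$, i.e., whenever $f_0,f_1$ are two factorizations of a common plot $p=\ev\circ f_0=\ev\circ f_1$ through $\calN(\calQ)$.

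In the forward direction, given a global section $\{\mu_p\}_{p\in\calD_X}$ of $F$, I would set $\eta_f:=\mu_{\ev\circ f}$. Coherence of $\eta$ under arrows $f_g^h$ in $\calD(\calN(\calQ))$ reduces to coherence of $\mu$ under $f_{\ev\circ g}^{\ev\circ h}$ in $\calD(X)$, and the cocycle condition is immediate since both $\eta_{f_0}$ and $\eta_{f_1}$ equal $\mu_p$.

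The substantive direction is the inverse. Given a cocycle $\{\eta_f\}$, I would build $\mu_p\in F(p)$ for each plot $p\colon U_p\to X$ by gluing, using the classical sheaf $F|_p$ on $\open(U_p)$. By the generating family axiom, one chooses an open cover $\{U_\alpha\}$ of $U_p$ together with factorizations $p|_{U_\alpha}=q_\alpha\circ f_\alpha$, $q_\alpha\in\calQ$; each $f_\alpha$, viewed via the inclusion of the $q_\alpha$-summand as a plot of $\calN(\calQ)$, yields $\eta_{f_\alpha}\in F(p|_{U_\alpha})$. On each overlap $U_\alpha\cap U_\beta$, the restrictions $f_\alpha|_{U_\alpha\cap U_\beta}$ and $f_\beta|_{U_\alpha\cap U_\beta}$ form a plot of $\calN(\calQ)_1$, so the cocycle condition together with coherence of $\eta$ under restriction arrows forces the two restrictions of $\eta_{f_\alpha}$ and $\eta_{f_\beta}$ to $U_\alpha\cap U_\beta$ to agree. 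The sheaf axiom for $F|_p$ then produces a unique $\mu_p\in F(p)$ with the prescribed local values; the case of disconnected $U_p$ is handled component-wise using Remark~\ref{r:coprods}.

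It then remains to verify three routine compatibilities: (i) $\mu_p$ does not depend on the chosen cover or factorizations, which follows by passing to a common refinement and invoking the cocycle condition with sheaf uniqueness; (ii) the family $\{\mu_p\}$ is coherent with respect to arrows $f_p^q$ in $\calD(X)$, which follows because pulling back a covering factorization of $q$ through $f$ gives a covering factorization of $p$ and coherence of $\eta$ transports the section; and (iii) the two constructions are mutual inverses, again by local determination and sheaf uniqueness. The main technical obstacle is keeping the bookkeeping of restrictions, factorizations, and coherence arrows straight at the gluing step; the key conceptual point is simply that the cocycle condition on $\calN(\calQ)_1$ precisely encodes the ambiguity in the choice of factorization, which is exactly what is needed to glue the local pieces $\eta_{f_\alpha}$ via the classical sheaf property of $F|_p$.
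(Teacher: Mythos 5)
Your proposal is correct and follows essentially the same route as the paper's proof: both directions (restriction from a global section to a $0$-cocycle, and gluing a cocycle back to a global section via the classical sheaf property of $F|_p$ on overlaps, with independence of choices and coherence deduced from the cocycle condition) match the paper's argument. The only cosmetic difference is that you phrase the cocycle condition directly in terms of plots of $\calN(\calQ)_1$ before restricting to overlaps, whereas the paper states it on overlaps from the outset.
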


\begin{proof}
By Remarks~\ref{r:factorizations} and \ref{r:cech}, it suffices to assume that domains of plots in $\calQ$ are connected, in which case a degree-$0$ cochain of $F$ associated to $\calQ$ is given by a global section $\{\eta_f\}_{f\in \calD(\calN(\calQ))_0}$.  This cochain is a cocycle if, for every plot $p\colon U\to X$, open cover $\{U_\alpha\}$ of $U$, and factorization $p|_{U_\alpha}=q_\alpha\circ f_\alpha$ with $q_\alpha\colon V_\alpha\to X$ in $\calQ$ and $f_\alpha\colon U_\alpha\to V_\alpha$ smooth, $\eta_{f_{\alpha_1}}$ and $\eta_{f_{\alpha_2}}$ coincide on $U_{\alpha_1}\cap U_{\alpha_2}$ for every $\alpha_1,\alpha_2$.

Any global section $\{\eta_p\}_{p\in \calD_X}$ induces a global section $\{\eta_f\}_{f\in\calD(\calN(\calQ))_0}$ via restriction.  Conversely, given a cocycle $\{\xi_f\}_{f\in\calD(\calN(\calQ))_0}$, we extend it to a global section of $F$ as follows: for a plot $p\colon U\to X$, let $\{U_\alpha\}$ be an open cover of $U$, $\{q_\alpha\colon V_\alpha\to X\}\subset\calQ$, and $\{f_\alpha\colon U_\alpha\to V_\alpha\}$ a family of smooth functions such that $p|_{U_\alpha}=q_\alpha\circ f_\alpha$ for all $\alpha$.  Define $\xi_{p|_{U_\alpha}}:=\xi_{f_\alpha}$.  For any $\alpha_1,\alpha_2$, 
$$q_{\alpha_1}\circ f_{\alpha_1}|_{U_{\alpha_1}\cap U_{\alpha_2}}=p|_{U_{\alpha_1}\cap U_{\alpha_2}}=q_{\alpha_2}\circ f_{\alpha_2}|_{U_{\alpha_1}\cap U_{\alpha_2}},$$ from which the cocycle condition implies that 
\[(\xi_{p|_{U_{\alpha_1}}})|_{U_{\alpha_1}\cap U_{\alpha_2}} =\xi_{f_{\alpha_1}}|_{U_{\alpha_1}\cap U_{\alpha_2}} = \xi_{f_{\alpha_2}}|_{U_{\alpha_1}\cap U_{\alpha_2}} = (\xi_{p|_{U_{\alpha_2}}})|_{U_{\alpha_1}\cap U_{\alpha_2}}. \]
Since $F$ restricts to a sheaf on $\open(U)$ in the classical sense, the family $\{\xi_{p|_{U_\alpha}}\}$ glues together into a unique object $\xi_p\in F(p)$.  It follows from the cocycle condition that $\xi_p$ is independent of the choices of $\{q_\alpha\}$ and $\{f_\alpha\}$.  A similar argument yields the coherence of $\{\xi_p\}_{p\in\calD_X}$.
\end{proof}

To define general \v{C}ech cohomology of a diffeological space, one takes a limit of \v{C}ech cohomology over refinements of generating families, which we define now.

\begin{definition}[Refinement of a Generating Family]\labell{d:refinement}
Let $X$ be a diffeological space with generating family $\calQ=\{q_\alpha\colon U_\alpha\to X\}_{\alpha\in A}$.  A {\em refinement of $\calQ$} is a generating family $\calR=\{r_\beta\colon V_\beta\to X\}_{\beta\in B}$ with a map of index sets $\varphi\colon B\to A$ such that for every plot $r_\beta\colon V_\beta\to X$ of $\calR$, there exists a smooth map $f_\beta\colon V_\beta\to U_{\varphi(\beta)}$ satisfying $q_{\varphi(\beta)}\circ f_\beta=r_\beta.$

Given two generating families $\calQ$ and $\calR$ of $X$, a {\em common refinement of $\calQ$ and $\calR$} is a third generating family $\calS$ which refines both $\calQ$ and $\calR$.
\end{definition}

\begin{lemma}\labell{l:alwayscrs}
Let $X$ be a diffeological space with generating families $\calQ$ and $\calR$.  There exists a common refinement of $\calQ$ and $\calR$.
\end{lemma}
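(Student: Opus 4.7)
The plan is to construct a common refinement $\calS$ of $\calQ$ and $\calR$ out of those plots that simultaneously factor through a plot of $\calQ$ and a plot of $\calR$, with all the factorization data built into the index.

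More concretely, write $\calQ = \{q_\alpha\colon U_\alpha \to X\}_{\alpha \in A}$ and $\calR = \{r_\beta\colon V_\beta \to X\}_{\beta \in B}$. I would take as index set
\[
C := \bigl\{(\alpha,\beta,W,f,h) : \alpha\in A,\; \beta\in B,\; W \text{ Euclidean open},\; f\in\CIN(W,U_\alpha),\; h\in\CIN(W,V_\beta),\; q_\alpha\circ f = r_\beta\circ h\bigr\},
\]
and for each $c = (\alpha,\beta,W,f,h) \in C$ let $s_c := q_\alpha\circ f = r_\beta\circ h \colon W \to X$. Set $\calS := \{s_c\}_{c\in C}$. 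The refinement maps are then immediate: send $c \mapsto \alpha$ with associated smooth map $f\colon W \to U_\alpha$ satisfying $q_\alpha\circ f = s_c$ (for refinement of $\calQ$), and $c\mapsto \beta$ with associated smooth map $h\colon W \to V_\beta$ satisfying $r_\beta\circ h = s_c$ (for refinement of $\calR$).

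The only real content is showing $\calS$ is itself a generating family of $X$. Given a plot $p\colon U \to X$, I would first use that $\calQ$ generates to cover $U$ by opens $\{U_\alpha'\}$ together with smooth maps $f_\alpha\colon U_\alpha' \to U_{\alpha}$ and plots $q_\alpha\in\calQ$ (reindexing so the labels match) with $p|_{U_\alpha'} = q_\alpha\circ f_\alpha$. Then, for each $\alpha$, I apply the generating property of $\calR$ to the plot $p|_{U_\alpha'}$ to further cover $U_\alpha'$ by opens $\{U_{\alpha,\beta}'\}$ with smooth maps $h_{\alpha,\beta}\colon U_{\alpha,\beta}' \to V_{\beta}$ and plots $r_\beta \in \calR$ satisfying $p|_{U_{\alpha,\beta}'} = r_\beta\circ h_{\alpha,\beta}$. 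On each $U_{\alpha,\beta}'$ the plot $p|_{U_{\alpha,\beta}'}$ factors through both families simultaneously, so the tuple $c_{\alpha,\beta} := (\alpha,\beta,U_{\alpha,\beta}', f_\alpha|_{U_{\alpha,\beta}'}, h_{\alpha,\beta})$ lies in $C$ and $p|_{U_{\alpha,\beta}'} = s_{c_{\alpha,\beta}}\circ \id_{U_{\alpha,\beta}'}$. The opens $\{U_{\alpha,\beta}'\}$ cover $U$, which yields the desired local factorization of $p$ through $\calS$.

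The main (minor) obstacle is purely bookkeeping: one must carry along the witnessing data $(f,h)$ in the index so that the refinement maps of Definition~\ref{d:refinement} can be read off. Once the correct indexing set $C$ is chosen the rest is a direct two-step application of the generating property, first for $\calQ$ and then for $\calR$.
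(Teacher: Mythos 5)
Your proof is correct and rests on the same idea as the paper's: produce a family of plots that factor through both $\calQ$ and $\calR$ simultaneously, by applying the generating property twice, and carry the witnessing maps along so the refinement data of Definition~\ref{d:refinement} can be read off. The only difference is cosmetic: the paper takes the leaner family consisting of restrictions $q_\alpha|_{U^\alpha_\gamma}$ of plots of $\calQ$ to opens on which they factor through $\calR$ (refining $\calQ$ via inclusions and $\calR$ via the factoring maps), whereas you take the larger ``universal'' family of all simultaneously factorable plots; both verifications go through.
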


\begin{proof}
Let $\calQ=\{q_\alpha\colon U_\alpha\to X\}_{\alpha\in A}$ and $\calR=\{r_\beta\colon V_\beta\to X\}_{\beta\in B}$.  Since $\calR$ is a generating family, for each $q_\alpha\in\calQ$ there is an open cover $\{U^\alpha_\gamma\}_{\gamma\in\Gamma_\alpha}$ of $U_\alpha$, plots $\{r_{\beta(\alpha,\gamma)}\colon V_{\beta(\alpha,\gamma)}\to X\}\subset\calR$, and a family of smooth maps $\{f^\alpha_\gamma\colon U^\alpha_\gamma\to V_{\beta(\alpha,\gamma)}\}$ such that $q_\alpha|_{U^\alpha_\gamma}=r_{\beta(\alpha,\gamma)}\circ f^\alpha_\gamma$ for every $\gamma$.  Define $\calS$ to be the collection $\{q_\alpha|_{U^\alpha_\gamma}\}_{(\alpha,\gamma)}$ as $\alpha$ runs over $A$ and $\gamma$ over $\Gamma_\alpha$.  Then $\calS$ is a generating family, a refinement of $\calQ$ via the map $(\alpha,\gamma)\mapsto\alpha$ and the inclusions $U^\alpha_\gamma\to U_\alpha$, and a refinement of $\calR$ via the map $(\alpha,\gamma)\mapsto\beta(\alpha,\gamma)$ and the maps $f^\alpha_\gamma$.
\end{proof}

Using the notation of Definition~\ref{d:refinement}, given a refinement $\calR$ of a generating family $\calQ$ of $X$, there is an induced smooth map $f\colon\calN(\calR)\to\calN(\calQ)$ given by $f:=\coprod_\beta f_\beta$.  It is straightforward to show:

\begin{lemma}\labell{l:refinement}
Fix a diffeological space $X$.
\begin{enumerate}
\item\labell{i:ev comm}Let $f\colon\calN(\calR)\to\calN(\calQ)$ be the map induced by a refinement $\calR$ of a generating family $\calQ$ of $X$.  If $\ev_\calQ$ and $\ev_\calR$ are the evaluation maps of $\calN(\calQ)$ and $\calN(\calR)$, respectively, then $\ev_\calQ\circ f = \ev_\calR$.  
\item If $\calQ$ and $\calR$ are two generating families of $X$ with common refinement $\calS$ and induced maps $f\colon\calN(\calS)\to\calN(\calQ)$ and $f'\colon\calN(\calS)\to\calN(\calR)$, then $\ev_\calQ\circ f = \ev_\calR\circ f'$.
\end{enumerate}
\end{lemma}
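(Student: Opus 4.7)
The plan is to verify both statements by directly unpacking the construction of the induced map $f$ from the refinement data together with the definitions of the evaluation maps on the nebulae; no nontrivial ideas should be required, in agreement with the paper's assertion that the lemma is straightforward.

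For (1), I would write $\calQ=\{q_\alpha\colon U_\alpha\to X\}_{\alpha\in A}$ and $\calR=\{r_\beta\colon V_\beta\to X\}_{\beta\in B}$ as in Definition~\ref{d:refinement}, with refining data $\varphi\colon B\to A$ and smooth maps $f_\beta\colon V_\beta\to U_{\varphi(\beta)}$ satisfying $q_{\varphi(\beta)}\circ f_\beta=r_\beta$, so that $f=\coprod_\beta f_\beta$ is the induced map on nebulae.  Picking a point $v\in V_\beta\subseteq\calN(\calR)$, the definition of the evaluation map of a nebula gives $\ev_\calR(v)=r_\beta(v)$, while $\ev_\calQ(f(v))=\ev_\calQ(f_\beta(v))=q_{\varphi(\beta)}(f_\beta(v))$.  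The refinement identity $q_{\varphi(\beta)}\circ f_\beta=r_\beta$ matches these two values.  Since $\calN(\calR)$ is the diffeological sum $\coprod_\beta V_\beta$, this pointwise equality across each summand assembles to $\ev_\calQ\circ f=\ev_\calR$.

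For (2), I would simply apply part (1) twice: to $\calS$ as a refinement of $\calQ$, giving $\ev_\calQ\circ f=\ev_\calS$, and to $\calS$ as a refinement of $\calR$, giving $\ev_\calR\circ f'=\ev_\calS$.  Transitivity of equality then yields $\ev_\calQ\circ f=\ev_\calR\circ f'$.

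The whole argument is a bookkeeping exercise in the definitions of refinement, nebula, and evaluation map, so I anticipate no genuine obstacle.  The only point worth brief mention is checking that $f$ really is a well-defined smooth map $\calN(\calR)\to\calN(\calQ)$, which follows at once from the universal property of the sum diffeology on $\calN(\calR)=\coprod_\beta V_\beta$ applied to the smooth maps $f_\beta\colon V_\beta\to U_{\varphi(\beta)}\hookrightarrow\calN(\calQ)$.
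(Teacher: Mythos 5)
Your proof is correct and is exactly the routine unpacking of definitions that the paper has in mind; the paper omits the argument entirely, labelling the lemma ``straightforward to show.'' Both parts check out: the pointwise computation $\ev_\calQ(f(v))=q_{\varphi(\beta)}(f_\beta(v))=r_\beta(v)=\ev_\calR(v)$ on each summand gives (1), and applying (1) to each arm of the common refinement gives (2).
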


\begin{remark}\labell{r:refinement}
A partial converse is true: given a diffeological space $X$, if $\calQ$ is a generating family, $\calR$ a generating family whose plots have connected domains, and $f\colon\Cn(\calR)\to \Cn(\calQ)$ a smooth map satisfying $\ev_\calQ\circ f = \ev_\calR$, then $\calR$ is a refinement of $\calQ$ with induced map $f$.
\end{remark}

Refinements of generating families induce chain maps between cohomology complexes.

\begin{lemma}\labell{l:refinementtocoho}
Let $X$ be a diffeological space, $\calQ$ a generating family with refinement $\calR$, and $F\colon\op{\Cd(X)}\to \ab$ a presheaf of abelian groups on $X$.  The induced map $f\colon\calN(\calR)\to\calN(\calQ)$ induces a map $\check{f}\colon \check{H}^k(\calQ,F) \to \check{H}^k(\calR,F).$
\end{lemma}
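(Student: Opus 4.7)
The plan is to build a chain map between the \v{C}ech cochain complexes $(\check{C}^\bullet(\calQ,F),\partial)$ and $(\check{C}^\bullet(\calR,F),\partial)$ and then pass to cohomology. The entire construction rests on promoting the given map $f\colon\calN(\calR)\to\calN(\calQ)$ to compatible maps on every iterated fiber product.

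First, for each $k\geq 0$, I would define $f_k\colon\calN(\calR)_k\to\calN(\calQ)_k$ by $(x_0,\dots,x_k)\mapsto(f(x_0),\dots,f(x_k))$. By Lemma~\ref{l:refinement}~\eqref{i:ev comm}, $\ev_\calQ\circ f=\ev_\calR$, so if $\ev_\calR(x_0)=\cdots=\ev_\calR(x_k)$ then $\ev_\calQ(f(x_0))=\cdots=\ev_\calQ(f(x_k))$, and hence $f_k$ lands in $\calN(\calQ)_k$. Smoothness follows from the universal property of fiber products (Definition~\ref{d:fiber product}), and the same computation shows $\ev_{\calQ,k}\circ f_k=\ev_{\calR,k}$. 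A direct unwrapping of definitions gives the simplicial identity $f_{k-1}\circ d_i = d_i\circ f_k$ for each $0\leq i\leq k$.

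Next, I would translate these maps to the level of cochains. Since $\ev_{\calQ,k}\circ f_k=\ev_{\calR,k}$, part (3) of Proposition~\ref{p:pullbackpracticalities} yields the equality $f_k^*(\ev_{\calQ,k}^*F)=\ev_{\calR,k}^*F$ of presheaves on $\calN(\calR)_k$. Applying part (1) of the same proposition gives an induced morphism on global sections
$$\widetilde{f}_k\colon\check{C}^k(\calQ,F)=\ev_{\calQ,k}^*F(\calN(\calQ)_k)\longrightarrow\ev_{\calR,k}^*F(\calN(\calR)_k)=\check{C}^k(\calR,F),$$
concretely sending a coherent family $\{\eta_g\}_{g\in\calD(\calN(\calQ)_k)}$ to $\{\eta_{f_k\circ h}\}_{h\in\calD(\calN(\calR)_k)}$.

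Finally, I would check that $\widetilde{f}_\bullet$ is a chain map. The simplicial identity $f_{k-1}\circ d_i=d_i\circ f_k$ together with part (3) of Proposition~\ref{p:pullbackpracticalities} gives $\widetilde{f}_{k+1}\circ\widetilde{d}_i=\widetilde{d}_i\circ\widetilde{f}_k$ for each $i$; summing with signs shows $\widetilde{f}_{k+1}\circ\partial = \partial\circ\widetilde{f}_k$. A chain map descends to cohomology, producing the desired map $\check{f}\colon\check{H}^k(\calQ,F)\to\check{H}^k(\calR,F)$.

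The only step requiring care is the identification $f_k^*(\ev_{\calQ,k}^*F)=\ev_{\calR,k}^*F$: this is an on-the-nose equality of presheaves (not merely a natural isomorphism), which is why part (3) of Proposition~\ref{p:pullbackpracticalities} was stated with equality. Once that is accepted, the rest of the argument is routine bookkeeping of the simplicial and presheaf identities.
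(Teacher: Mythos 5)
Your proposal is correct and follows essentially the same route as the paper: define $f^k$ coordinatewise on the $k$-fold fiber products, verify the simplicial identity $d_i\circ f^k=f^{k-1}\circ d_i$, obtain a chain map on \v{C}ech cochains via the pullback machinery of Proposition~\ref{p:pullbackpracticalities}, and pass to cohomology. The paper's proof is just a terser version of the same argument; your added checks (that $f_k$ lands in $\calN(\calQ)_k$, that $\ev_{\calQ,k}\circ f_k=\ev_{\calR,k}$, and the on-the-nose equality $f_k^*(\ev_{\calQ,k}^*F)=\ev_{\calR,k}^*F$) are exactly the details the paper leaves implicit.
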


\begin{proof}
Define $f^k\colon\Cn(\calR)_k\to\Cn(\calQ)_k$ for each $k$ by $f^k(x_0,\ldots, x_k):=(f(x_0),\ldots,f(x_k))$.  For each $k$, the degeneracy maps $d_i$ satisfy the equality $d_i\circ f^k=f^{k-1}\circ d_i$.  It follows that the resulting chain maps $f^\sharp\colon(\check{C}^*(\calQ,F),\del)\to (\check{C}^*(\calR,F),\del)$ induce a map of cohomology groups $\check{f}\colon\check{H}^k(\calQ,F)\to \check{H}^k(\calR,F)$.
\end{proof}

We are now ready to give the definition for \v{C}ech cohomology of a diffeological space $X$.

\begin{definition}\labell{d:cech2}
Let $X$ be a diffeological space and $F\colon\op{\Cd(X)}\to \ab$ a presheaf of abelian groups on $X$.  The {\em degree-$k$ \v{C}ech cohomology group of $F$ over $X$}, denoted $\check{H}^k(X,F)$, is the directed limit of the set of cohomology classes $\check{H}^k(\calQ,F)$ partially ordered by refinement; that is, the colimit over the diagram of morphisms $\check{f}\colon \check{H}^k(\calQ,F)\to \check{H}^k(\calR,F)$ induced by refinement (see Lemma~\ref{l:refinementtocoho}).
\end{definition}

\v{C}ech cohomology is natural.

\begin{proposition}\labell{p:functorial}
Let $\varphi\colon X\to Y$ be smooth and $F\colon\op{\Cd(Y)}\to \ab$ a presheaf of abelian groups over $Y$.  Then $\varphi$ induces a map on cohomology $\varphi^*\colon\check{H}^k(Y,F)\to \check{H}^k(X,\varphi^*F)$.
\end{proposition}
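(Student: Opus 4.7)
The plan is to construct, for each generating family $\calQ$ of $Y$, a generating family $\calR$ of $X$ together with a smooth map $\tilde\varphi\colon\calN(\calR)\to\calN(\calQ)$ covering $\varphi$, and then use $\tilde\varphi$ to pull back \v{C}ech cochains. The family $\calR$ is the collection of triples $(r,q,g)$ where $r\in\calD_X$, $q\in\calQ$, and $g$ is smooth with $\varphi\circ r=q\circ g$; equivalently, plots of $X$ that admit a chosen factorization of their composite with $\varphi$ through $\calQ$. This is a generating family of $X$: any plot $p\colon U\to X$ gives a plot $\varphi\circ p$ of $Y$, and factoring $\varphi\circ p|_{U_\alpha}=q_\alpha\circ f_\alpha$ through $\calQ$ on an open cover $\{U_\alpha\}$ of $U$ produces triples $(p|_{U_\alpha},q_\alpha,f_\alpha)\in\calR$ that cover $p$. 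The map $\tilde\varphi$ restricts on the summand indexed by $(r,q,g)$ to $g\colon U_r\to U_q$, and by construction satisfies $\ev_\calQ\circ\tilde\varphi=\varphi\circ\ev_\calR$.

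Next, the map $\tilde\varphi$ extends to smooth maps $\tilde\varphi^k\colon\calN(\calR)_k\to\calN(\calQ)_k$ on the $k$-fold fiber products via $\tilde\varphi^k(x_0,\dots,x_k):=(\tilde\varphi(x_0),\dots,\tilde\varphi(x_k))$, and an immediate check shows that $\tilde\varphi^k$ commutes with the degeneracy maps $d_i$. By functoriality of pullback (Proposition~\ref{p:pullbackpracticalities}(3)) together with $\varphi\circ\ev'_k=\ev_k\circ\tilde\varphi^k$, one has $\ev'_k{}^*\varphi^*F=(\tilde\varphi^k)^*\ev_k^*F$, where $\ev'_k$ denotes the $k$th evaluation map for $\calR$. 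Thus the induced morphism on global sections from Proposition~\ref{p:pullbackpracticalities}(1) yields a group homomorphism
\[\tilde\varphi^\sharp\colon\check{C}^k(\calQ,F)=\ev_k^*F(\calN(\calQ)_k)\longrightarrow\ev'_k{}^*\varphi^*F(\calN(\calR)_k)=\check{C}^k(\calR,\varphi^*F).\]
Commutativity of $\tilde\varphi^k$ with the $d_i$ implies that $\tilde\varphi^\sharp$ commutes with the coboundary $\del$, so we obtain a chain map and an induced homomorphism $\check{H}^k(\calQ,F)\to\check{H}^k(\calR,\varphi^*F)$. Composing with the structure map into the colimit gives a candidate homomorphism $\check{H}^k(\calQ,F)\to\check{H}^k(X,\varphi^*F)$ for each $\calQ$.

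The last step is to show these maps are compatible with refinements of $\calQ$ in $\calD_Y$, so that they descend to a single map $\varphi^*\colon\check{H}^k(Y,F)\to\check{H}^k(X,\varphi^*F)$ out of the colimit. Given a refinement $\calQ'$ of $\calQ$ with induced map $h\colon\calN(\calQ')\to\calN(\calQ)$, the corresponding pullback family $\calR'$ of $X$ admits a natural refinement map to $\calR$ (sending the triple $(r,q',g)$ to $(r,q,h_{q'}\circ g)$, using the refinement factorization $q=q\circ h_{q'}\circ\mathrm{id}$), and the square $\tilde\varphi\circ(\text{refinement})=h\circ\tilde\varphi'$ commutes at the level of nebulae, hence at the level of cochain complexes. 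The main obstacle is precisely this bookkeeping: verifying that the construction of $\calR$ is functorial enough in $\calQ$ that different choices of triples $(q,g)$ witnessing the same underlying factorization data yield compatible chain maps after a single further refinement, so that the induced map on the directed colimit is well-defined and independent of all auxiliary choices. Once this compatibility is in hand, the universal property of the colimit delivers the desired $\varphi^*$.
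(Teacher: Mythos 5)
Your proposal is correct and follows essentially the same route as the paper's proof: produce a generating family of $X$ whose plots, after composing with $\varphi$, factor through $\calQ$, obtain a nebula map covering $\varphi$ that commutes with the degeneracy maps, and then check compatibility with refinements of $\calQ$ before invoking the universal property of the colimit. The only (cosmetic) difference is that you build a canonical ``pullback'' generating family of all witnessed factorizations, whereas the paper starts from an arbitrary generating family $\calR$ of $X$ and refines it until each plot's composite with $\varphi$ factors through $\calQ$.
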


\begin{proof}
Fix $a\in \check{H}^k(Y,F)$.  There is a generating family $\calQ$ for $Y$ such that $a\in \check{H}^k(\calQ,F)$.  Let $\calR$ be a generating family for $X$.  For each $q$ in $\calR$, since $\varphi\circ q$ is a plot of $Y$, there is an open cover $\{U_\alpha\}$ of $U_q$ such that each restriction $\varphi\circ q|_{U_\alpha}$ factors through an element of $\calQ$.  The inclusions $U_\alpha\to U_q$ induce a refinement $\calR'$ of $\calR$, which contains the restrictions $q|_{U_\alpha}$ for each $\alpha$ and each $q\in\calR$.

By construction, there is a smooth map $\psi\colon\Cn(\calR')\to \Cn(\calQ)$ such that $\ev\circ\psi=\varphi\circ\ev$. For each $k$, $\psi$ induces a map $\psi^k\colon\calN(\calR')_k\to\calN(\calQ)_k$ sending $(x_0,\dots,x_k)$ to $(\psi(x_0),\dots,\psi(x_k))$ satisfying $\ev_k\circ \psi^k=\varphi\circ\ev_k$.  Moreover, with respect to the degeneracy maps the equality $d_i\circ\psi^k=\psi^{k-1}\circ d_i$ holds for all $k$ and $i$.  The resulting chain maps $\psi^\sharp\colon\check{C}^\bullet(\calQ,F)\to \check{C}^\bullet(\calR',\varphi^*F)$ descend to a map of cohomology groups $\check{\psi}\colon\check{H}^k(\calQ,F)\to \check{H}^k(\calR',\varphi^*F)$. 

Suppose $\calQ'$ is a refinement of $\calQ$. As above, there exists a refinement $\calR''$ of $\calR'$ in which each plot of $\calR''$ factors through an element of $\calQ'$. With respect to the induced maps between refinements and nebulae, the resulting commutative diagram 
\[
\xymatrix{
\Cn(\calR'') \ar[r] \ar[d] &\Cn(\calQ')  \ar[d] \\
\Cn(\calR')\ar[r] & \Cn(\calQ) ,
}
\]
induces the commutative diagram by Lemma~\ref{l:refinementtocoho}
\[
\xymatrix@R=1em{
& \check{H}(\calR';\varphi^*F) \ar[dd] \ar[ld] & \check{H}(\calQ;F) \ar[l] \ar[dd] \\
\check{H}(X;\varphi^*F) && \\
& \check{H}(\calR'';\varphi^*F) \ar[lu]& \check{H}(\calQ';F). \ar[l] 
}
\]
To conclude, we obtain a collection of maps $\check{H}(\calQ;F) \to \check{H}(X;\varphi^*F)$ for every generating family $\calQ$ of $Y$, such that if $\calQ'$ is a refinement of $\calQ$, the diagram
\[
\xymatrix@R=1em{
&  \check{H}(\calQ;F) \ar[ld] \ar[dd] \\
\check{H}(X;\varphi^*F) && \\
&  \check{H}(\calQ';F) \ar[lu] 
}
\]
commutes.  The universal property of colimits gives the required induced map on direct limits.
 \end{proof}
 
 We end this section with showing that morphisms between sheaves induce maps between \v{C}ech cohomology groups.
 
\begin{proposition}\labell{p:morph cohom}
Let $X$ be a diffeological space and $F,G$ sheaves of abelian groups on $X$. A map of sheaves $\Phi\colon F\Rightarrow G$ induces a homomorphism  $\check{\Phi}\colon\check{H}^k(X;F)\to\check{H}^k(X;G)$.
\end{proposition}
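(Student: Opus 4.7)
The plan is to mimic the usual construction of the induced map on cohomology at the level of each generating family, then pass to the colimit using the universal property. The novelty over classical arguments is just to check that the pullback sheaves and induced chain maps behave correctly.

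First, I would fix a generating family $\calQ$ of $X$ and, for each $k\geq 0$, pull the morphism $\Phi$ back along $\ev_k\colon \calN(\calQ)_k\to X$ to obtain a morphism of sheaves $\ev_k^*\Phi\colon \ev_k^*F\Rightarrow \ev_k^*G$ over $\calN(\calQ)_k$. By Definition~\ref{d:morphsheaves}, this induces a map on global sections
\[
\Phi^k_\calQ\colon \check{C}^k(\calQ,F) = \ev_k^*F(\calN(\calQ)_k)\longrightarrow \ev_k^*G(\calN(\calQ)_k) = \check{C}^k(\calQ,G).
\]
The degeneracy maps $d_i\colon \calN(\calQ)_{k+1}\to \calN(\calQ)_k$ commute with the morphisms $\ev_k^*\Phi$ by naturality of $\Phi$ (since $d_i^*(\ev_k^*\Phi) = \ev_{k+1}^*\Phi$ by functoriality of pullback, Proposition~\ref{p:pullbackpracticalities}(3)). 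Hence $\widetilde{d}_i\circ \Phi^k_\calQ = \Phi^{k+1}_\calQ\circ \widetilde{d}_i$, so that $\Phi^\bullet_\calQ$ is a cochain map and descends to a homomorphism $\check{\Phi}_\calQ\colon \check{H}^k(\calQ,F)\to \check{H}^k(\calQ,G)$.

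Next, I would check that $\check{\Phi}_\calQ$ is compatible with refinement. If $\calR$ is a refinement of $\calQ$ with induced smooth map $f\colon\calN(\calR)\to\calN(\calQ)$, then by Lemma~\ref{l:refinement}\eqref{i:ev comm} we have $\ev_\calQ\circ f^k = \ev_\calR$ at every simplicial level, so $(f^k)^*(\ev_k^*\Phi) = \ev_k^*\Phi$ (again by Proposition~\ref{p:pullbackpracticalities}(3)). The induced chain maps $f^\sharp$ from Lemma~\ref{l:refinementtocoho} therefore commute with $\Phi^\bullet_\calQ$ and $\Phi^\bullet_\calR$, yielding a commutative square
\[
\xymatrix@R=1em{
\check{H}^k(\calQ,F)\ar[r]^{\check{f}}\ar[d]_{\check{\Phi}_\calQ} & \check{H}^k(\calR,F)\ar[d]^{\check{\Phi}_\calR}\\
\check{H}^k(\calQ,G)\ar[r]^{\check{f}} & \check{H}^k(\calR,G).
}
\]

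Finally, I would assemble these maps. Composing $\check{\Phi}_\calQ$ with the structural maps into the colimit $\check{H}^k(X;G)$ yields a cocone over the diagram $\{\check{H}^k(\calQ,F)\}$ indexed by generating families and refinements; by the universal property of the directed colimit defining $\check{H}^k(X;F)$, there is a unique induced homomorphism $\check{\Phi}\colon \check{H}^k(X;F)\to \check{H}^k(X;G)$. No step is genuinely difficult; the only thing to be careful about is making sure pullback interacts well with the simplicial maps $d_i$ and with the refinement maps $f^k$, which amounts to repeated application of Proposition~\ref{p:pullbackpracticalities} and of the naturality of $\Phi$. If anything, the most bookkeeping-heavy part is simply verifying that $\Phi^\bullet_\calQ$ is a cochain map and the compatibility square above; these are both formal consequences of naturality.
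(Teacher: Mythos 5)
Your proposal is correct and follows essentially the same route as the paper: the paper likewise defines the chain map $\Phi^k_\sharp$ on $\check{C}^k(\calQ,F)$ (described there via the coherent families $\{\eta_{(f_0,\dots,f_k)}\}$, which is exactly the induced map on global sections of $\ev_k^*\Phi$), verifies commutation with $\del$ by naturality, and then handles refinements and the colimit by appealing to the argument of Proposition~\ref{p:functorial}. Your version just spells out the refinement-compatibility square and the universal-property step that the paper leaves as a reference.
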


\begin{proof}
Fix a map of sheaves $\Phi\colon F\Rightarrow G$, and let $\widetilde{\Phi}\colon F(X)\to G(X)$ be the induced map on global sections (see Definition~\ref{d:morphsheaves}).  Fix a generating family $\calQ$ of $X$.  By Remarks~\ref{r:factorizations} and \ref{r:cech}, $\widetilde{\Phi}$, in turn, induces a map $\Phi^k_\sharp\colon\check{C}^k(\calQ,F)\to\check{C}^k(\calQ,G)$ where $$\Phi^k_\sharp\left(\{\eta_{(f_0,\dots,f_k)}\}\right)=\{\Phi_{\ev_k\circ(f_0,\dots,f_k)}(\eta_{(f_0,\dots,f_k)})\}.$$
By \eqref{e:cech}, $\Phi^{k+1}_\sharp\circ\del=\del\circ\Phi^{k}_\sharp$, and so we obtain a map between \v{C}ech cohomology groups associated to $\calQ$.  The remainder of the proof is similar to that of Proposition~\ref{p:functorial}, from which we obtain a map $\check{\Phi}\colon\check{H}^k(X;F)\to\check{H}^k(X;G)$.
\end{proof}


\section{Classifying Principal $G$-bundles}\labell{s:bundles}

In this section, we fix a diffeological group $G$ and prove that in the abelian case, the isomorphism classes of principal $G$-bundles over a diffeological space $X$ are in natural bijection with the cohomology classes $\check{H}^1(X,G):=\check{H}^1(X,C^\infty(\cdot, G))$.  We begin by defining a principal $G$-bundle:

\begin{definition}[Principal $G$-Bundle]\labell{d:principal bundle}
Given a diffeological space $X$, a (right) {\em principal $G$-bundle over $X$} is a diffeological space $P$ with a smooth right action of $G$ and quotient map $\pi\colon P\to X$, for which the action map
\[a\colon P\times G\to P\times_X P,\; (p,g)\mapsto (p,p\cdot g)\]
is a diffeomorphism. We shall denote a principal bundle by $\pi: P \to X$.

A {\em map  $P \to P'$ of principal $G$-bundles over $X$} is a $G$-equivariant map intertwining the quotient maps to $X$.

A principal $G$-bundle over $X$ that is isomorphic to $X\times G$ will be called {\em trivializable}.
\end{definition}

\begin{remark}\labell{r:principal bundle}
This definition of principal bundle is equivalent to the one that appears in \cite[Article 8.11]{PIZ}.  Indeed, the latter definition is essentially the same except that it requires the action map $P\times G\to P\times P$ be an induction.  However, the action map is injective with image $P\times_X P$, which is equipped with the subset diffeology induced by $P\times P$.  The equivalence follows.
\end{remark}

The following proposition is a collection of facts that are either straightforward to check and/or can be found in \cite[Articles 8.11-12]{PIZ}.

\begin{proposition}\labell{p:bdle prop}
Let $\pi\colon P\to X$ be a principal $G$-bundle.
\begin{enumerate}
\item The \emph{division map} $d:=\pr_G\circ a^{-1}$ is smooth.
\item \labell{i:torsor fiber}The action of $G$ on $P$ is free and transitive on fibres of $\pi$.
\item \labell{i:triv} $\pi$ is trivializable if and only if there exists a global section $\sigma\colon X\to P$ of $\pi$.
\item Given a smooth map $\varphi\colon Y\to X$, the pullback $\varphi^*P:=P\times_X Y\to Y$ is a principal $G$-bundle.
\end{enumerate}
\end{proposition}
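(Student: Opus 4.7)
The four parts should be tackled in order, since later items depend on earlier ones; in particular the smoothness of the division map from (1) is the key tool that makes (3) and (4) work.

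\textbf{Part (1).} Since $a$ is by hypothesis a diffeomorphism, $a^{-1}\colon P\times_X P\to P\times G$ is smooth. The projection $\pr_G\colon P\times G\to G$ is smooth (e.g.\ by the product diffeology of Definition~\ref{d:categ constr}), so $d=\pr_G\circ a^{-1}$ is a composition of smooth maps. This is essentially a one-line argument.

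\textbf{Part (2).} Freeness of the $G$-action is equivalent to injectivity of $p\mapsto p\cdot g$ for each $g$, but it is cleaner to argue directly: if $p\cdot g=p$, then $a(p,g)=(p,p)=a(p,e)$, and since $a$ is a diffeomorphism (hence in particular a bijection), $g=e$. For transitivity on fibres, given $p,q\in P$ with $\pi(p)=\pi(q)$ the pair $(p,q)$ lies in $P\times_X P$, so $a^{-1}(p,q)=(p,g)$ for some (in fact unique) $g\in G$, giving $p\cdot g=q$.

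\textbf{Part (3).} One direction is trivial: an isomorphism $\Phi\colon X\times G\to P$ yields the smooth section $\sigma(x):=\Phi(x,e)$, and the relation $\pi\circ \Phi=\pr_X$ gives $\pi\circ\sigma=\id_X$. For the converse, given a smooth section $\sigma\colon X\to P$, I would define $\Phi\colon X\times G\to P$ by $\Phi(x,g):=\sigma(x)\cdot g$; this is smooth, $G$-equivariant, and satisfies $\pi\circ\Phi=\pr_X$. The inverse is
\[\Phi^{-1}(p):=\bigl(\pi(p),\,d(\sigma(\pi(p)),p)\bigr),\]
which is smooth precisely because of Part (1). Verifying that $\Phi$ and $\Phi^{-1}$ are mutually inverse uses Part (2) (freeness and fibrewise transitivity). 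This is the only place where a small amount of care is needed, as the reader must see that the definition of $\Phi^{-1}$ really lands in $X\times G$ and that $\sigma(\pi(p))$ and $p$ are indeed in the same fibre.

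\textbf{Part (4).} Give $\varphi^*P=P\times_X Y$ the fibered product diffeology (Definition~\ref{d:fiber product}); the projection to $Y$ is smooth, and the $G$-action $(p,y)\cdot g:=(p\cdot g,y)$ is smooth since the $G$-action on $P$ is. The action map $a'$ for $\varphi^*P$ fits into
\[((p,y),g)\ \longmapsto\ \bigl((p,y),(p\cdot g,y)\bigr),\]
and I would write its inverse explicitly as $((p_1,y),(p_2,y))\mapsto((p_1,y),d(p_1,p_2))$, smooth by Part (1). A brief check confirms that $a'$ and this candidate inverse are mutually inverse (using Part (2) to see uniqueness of $g$), and both are smooth, so $a'$ is a diffeomorphism. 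The main subtlety is just keeping the universal properties of the two fibered products $P\times_X Y$ and $(\varphi^*P)\times_Y(\varphi^*P)$ straight, which follows formally from Definition~\ref{d:fiber product}.

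The main obstacle, if any, is bookkeeping in Part (3): one must correctly identify the inverse of the trivialization and invoke Part (1) at the right moment. Once $d$ is known to be smooth, the rest is a routine matter of writing down the candidate maps and verifying they are inverse to one another.
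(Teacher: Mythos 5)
Parts (1)--(3) of your proposal are correct and essentially identical to the paper's argument: the paper also treats (1) and (2) as immediate from the action map being a diffeomorphism, and your trivialization $\Phi(x,g)=\sigma(x)\cdot g$ with inverse $p\mapsto(\pi(p),d(\sigma(\pi(p)),p))$ is exactly the paper's $\pr_2\circ a\circ(\sigma\times\id_G)$ and $(\pi\times\id_G)\circ a^{-1}\circ(\sigma\circ\pi,\id_P)$.

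In Part (4), however, there is a genuine gap. Definition~\ref{d:principal bundle} requires not only that the action map be a diffeomorphism but also that the projection to the base be the \emph{quotient map} of the $G$-action; for the pullback this means you must verify that $\pr_Y\colon P\times_X Y\to Y$ is a subduction whose fibers are exactly the $G$-orbits, so that $Y$ carries the quotient diffeology of $(P\times_X Y)/G$. Your proposal only records that $\pr_Y$ is smooth and then checks the action-map condition, which does not suffice: smoothness plus surjectivity of a map does not make it a quotient map in the diffeological category. The paper closes this by taking an arbitrary plot $p\in\calD_Y$, using that $\pi$ is a subduction to produce local lifts $q_\alpha\colon U_\alpha\to P$ of $\varphi\circ p$, and observing that $(q_\alpha,p|_{U_\alpha})$ is then a local lift of $p$ to $P\times_X Y$; combined with the (easy) identification of the fibers of $\pr_Y$ with the $G$-orbits, this shows $\pr_Y$ is the quotient map. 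You should add this step; the rest of your Part (4), including the explicit inverse of $a'$ via the division map, matches the paper.
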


\jnote{removed:\\

\begin{proof}
The first two claims are immediate from the definition of a principal bundle and its action map.

For the third claim, suppose $P$ is trivializable.  The inclusion $X\to X\times G$ sending $x$ to $(x,1_G)$, composed with a bundle isomorphism from $X\times G$ to $P$, is a global section of $P$.

Conversely, suppose that $\sigma\colon X\to P$ is a smooth section of $\pi$.  The projection $\pr_2\circ a\circ (\sigma\times\id_G)\colon X\times G\to P$ is a smooth bijection, with inverse given by $(\pi\times\id_G)\circ a^{-1}\circ(\sigma\circ\pi,\id_P)$.  This proves the third claim.

The action of $G$ on $P$ and the trivial action of $G$ on $Y$ induce an action of $G$ on $P\times_X Y$.  Let $p\in\calD_Y$.  Since $\pi$ is a subduction, there exist an open cover $\{U_\alpha\}$ of $U_p$ and local lifts $q_\alpha\colon U_\alpha\to P$ of $\varphi\circ p$.  The plot $\pr_Y\circ(q_\alpha,p|_{U_\alpha})\colon U_\alpha\to P\times_X Y$ is a local lift of $p$ to $P\times_XY$.  As $p$ is arbitrary, this shows that $\pr_Y$ is a subduction.  The fibers of $\pr_Y$ are exactly the orbits of $G$, and so $\pr_Y$ is the quotient map of the $G$-action.

Define $b\colon (P\times_X Y)\times G\to (P\times_X Y)\times_Y (P\times_X Y)$ to be the smooth map $b(p,y,g):=(p,y,p\cdot g,y)$.  By Claim~\ref{i:torsor fiber}, the action of $G$ on $P$ is free and transitive on fibers of $\pi$, from which it follows that $b$ is bijective.  An inverse of $b$ is given by $(p_1,y,p_2,y)\mapsto(p_1,y,d(p_1,p_2))$ where $d$ is the division map of $P$.  As $b^{-1}$ is smooth, we conclude that $b$ is a diffeomorphism, and $P\times_X Y\to Y$ is a principal $G$-bundle.  This completes the proof.
\end{proof}

}

In general, principal $G$-bundles are not locally trivial with respect to the natural topology on their base (called the \emph{$D$-topology}); see \cite[Article 8.9]{PIZ}.  However, we get another form of local trivialization more appropriate for diffeology.

\begin{definition}\labell{d:plotwise local triv}
A {\em plotwise local trivialization} of a principal $G$-bundle $\pi\colon P\to X$ is a generating family $\calQ$ for $X$ and a smooth map $\tau\colon\Cn(\calQ)\to P$ satisfying $\pi\circ \tau=\ev$. 
\end{definition}

\begin{proposition}\labell{p:loc triv prop}
Let $\pi\colon P\to X$ be a principal $G$-bundle.
\begin{enumerate}
\item\labell{i:loc triv exist}$\pi$ admits a plotwise local trivialization.
\item\labell{i:pbtriv}Given a plotwise local trivialization $\tau\colon\calN(\calQ)\to P$, the pullback $\ev^*P\to\calN(\calQ)$ is trivializable.
\end{enumerate}
\end{proposition}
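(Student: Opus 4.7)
The two claims decouple cleanly, so the plan is to dispatch them in turn, using Example~\ref{x:nebula} for the first and Proposition~\ref{p:bdle prop} for the second.

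For Claim~(\ref{i:loc triv exist}), the strategy is to identify $\pi\colon P\to X$ as a subduction and then quote Example~\ref{x:nebula} verbatim. That $\pi$ is a subduction follows from its role as the quotient map of the $G$-action: $X$ carries the quotient diffeology under $G$, and quotient maps to quotients equipped with the quotient diffeology are subductions, as noted immediately after Definition~\ref{d:induction-subduction}. Once this is in place, Example~\ref{x:nebula} produces a generating family $\calQ$ of $X$ together with a smooth lift $\widetilde{\ev}\colon\calN(\calQ)\to P$ satisfying $\pi\circ\widetilde{\ev}=\ev$, and setting $\tau:=\widetilde{\ev}$ is the required plotwise local trivialization.

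For Claim~(\ref{i:pbtriv}), the plan is to construct an explicit global section of $\ev^*P\to\calN(\calQ)$ and appeal to Proposition~\ref{p:bdle prop}(\ref{i:triv}). The last item of Proposition~\ref{p:bdle prop} already ensures that $\ev^*P=P\times_X\calN(\calQ)\to\calN(\calQ)$ is itself a principal $G$-bundle, so triviality reduces to producing a smooth section of the projection to $\calN(\calQ)$. Defining $\sigma\colon\calN(\calQ)\to\ev^*P$ by $\sigma(u):=(\tau(u),u)$ does the job: the identity $\pi\circ\tau=\ev$ ensures $\sigma(u)$ lies in the fiber product, $\sigma$ is smooth as a pairing of smooth maps, and composing with the projection to $\calN(\calQ)$ recovers the identity.

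The only slightly delicate point is the subduction assertion at the start of the first claim. If one prefers not to fold this into the meaning of ``quotient map'' in Definition~\ref{d:principal bundle}, one could derive it directly from the diffeomorphism $a\colon P\times G\to P\times_X P$: given a plot $p\colon U\to X$, this extra structure on the pullback $P\times_X U$ suffices to produce local sections and hence local lifts of $p$ to $P$. Beyond this single book-keeping check, both parts of the proposition are immediate from the tools already developed.
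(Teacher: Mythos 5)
Your proposal is correct and follows essentially the same route as the paper: Claim~(\ref{i:loc triv exist}) via Example~\ref{x:nebula} applied to the subduction $\pi$ with $\tau=\widetilde{\ev}$, and Claim~(\ref{i:pbtriv}) via the explicit section $\sigma(u)=(\tau(u),u)$ together with Proposition~\ref{p:bdle prop}(\ref{i:triv}). Your extra remark justifying that $\pi$ is a subduction (it is the quotient map of the $G$-action, hence a subduction by the discussion after Definition~\ref{d:induction-subduction}) is a point the paper leaves implicit, and is a welcome bit of diligence rather than a divergence.
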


\begin{proof}
The first claim follows from Example~\ref{x:nebula}, where $\tau=\widetilde{\ev}$.

For the second claim, define a section $\sigma\colon\Cn(\calQ)\to \ev^*P$ by $\sigma(x):=(\tau(x),x)$.  The proof follows from Claim~\ref{i:triv} of Proposition~\ref{p:bdle prop}.
\end{proof}

Claim~\ref{i:loc triv exist} of Proposition~\ref{p:loc triv prop} reflects the necessary and sufficient condition for a principal $G$-bundle found in \cite[Article 8.13]{PIZ}: pullback bundles via plots are principal $G$-bundles in the classical sense; namely, they are locally trivial with respect to the topology of their bases (which are manifolds).

Continuing toward our goal of classifying principal $G$-bundles using degree-$1$ \v{C}ech cohomology, we next build a degree-$1$ \v{C}ech cocycle from any principal bundle given a choice of plotwise local trivialization.  Henceforth, assume that $G$ is an \emph{abelian} diffeological group. 

\begin{construction}\label{cons:cocyclefrombundle}
Given a principal $G$-bundle $\pi\colon P\to X$ and a plotwise local trivialization $\tau\colon\calN(\calQ)\to P$, define the $1$-cochain $c(\tau, P)\colon\Cn(\calQ)_1\to G$ in $\check{C}^1(\calQ,G)$ (see Example~\ref{x:mappingsheaves}) to be the composite
\begin{equation}\labell{eq:bundletococycle}
\xymatrix{\Cn(\calQ)_1\ar[r]^-{\tau\times\tau} \ar@/_1pc/[rr]_{c(\tau,P)} & P\times_X P \ar[r]^-{d} & G}
\end{equation} 
where $d$ is the division map.
\end{construction}

\begin{lemma}\labell{l:cocycle}
The $1$-cochain $c(\tau,P)$ above is a $1$-cocycle.
\end{lemma}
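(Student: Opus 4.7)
The plan is to verify that $\partial c(\tau, P) = 0$ in $\check{C}^2(\calQ, G)$ by a direct plotwise computation. By Remarks~\ref{r:factorizations} and \ref{r:cech}, since $\CIN(\,\cdot\,,G)$ is domain-determined and sheaf-like on disjoint unions, it suffices to verify the cocycle condition pointwise on $\calN(\calQ)_2$. Fix a point $(x_0, x_1, x_2) \in \calN(\calQ)_2$, so that $\ev(x_0) = \ev(x_1) = \ev(x_2)$. Setting $p_i := \tau(x_i)$, the compatibility $\pi \circ \tau = \ev$ guarantees that $p_0, p_1, p_2$ lie in a common fibre of $\pi$. Writing $g_{ij} := c(\tau, P)(x_i, x_j) = d(p_i, p_j)$, the defining relation of the division map (from Proposition~\ref{p:bdle prop}) gives $p_i \cdot g_{ij} = p_j$.

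From Example~\ref{x:mappingsheaves}, the coboundary expands as $\partial c(\tau, P) = c(\tau, P) \circ d_0 - c(\tau, P) \circ d_1 + c(\tau, P) \circ d_2$, and unwinding the degeneracies yields $\partial c(\tau, P)(x_0, x_1, x_2) = g_{12} - g_{02} + g_{01}$ (written additively since $G$ is abelian). So the required identity reduces to $g_{01} + g_{12} = g_{02}$. The key computation is
\[ p_0 \cdot (g_{01} + g_{12}) = (p_0 \cdot g_{01}) \cdot g_{12} = p_1 \cdot g_{12} = p_2 = p_0 \cdot g_{02}, \]
and applying Claim~\ref{i:torsor fiber} of Proposition~\ref{p:bdle prop} (that the action is free on fibres) cancels $p_0$ to deliver the desired equality.

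There is no serious obstacle here: the result is the diffeological incarnation of the classical fact that transition functions of a principal bundle satisfy the cocycle condition, and all smoothness needed to place $c(\tau, P)$ in $\check{C}^1(\calQ, G)$ was already recorded in Construction~\ref{cons:cocyclefrombundle} via smoothness of $\tau$ and of the division map $d$. The only subtle point worth flagging is the implicit use of domain-determinedness of $\CIN(\,\cdot\,,G)$ to reduce the cocycle identity, which a priori lives at the level of a coherent family over $\calD(\calN(\calQ)_2)$, to an equality of smooth maps on $\calN(\calQ)_2$ itself; this reduction is exactly the content of Example~\ref{x:mappingsheaves}.
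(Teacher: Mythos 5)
Your proof is correct and follows essentially the same route as the paper: expand $\partial c(\tau,P)(x_0,x_1,x_2)=c(\tau,P)(x_1,x_2)-c(\tau,P)(x_0,x_2)+c(\tau,P)(x_0,x_1)$ and observe that the defining property of the division map forces this to vanish. Your write-up merely makes explicit the freeness-of-the-action cancellation and the reduction to a pointwise identity via Example~\ref{x:mappingsheaves}, both of which the paper leaves implicit.
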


\begin{proof}
For all $x,y,z$ in the same fiber of $\ev$,
$$\partial c(\tau,P)(x,y,z) = c(\tau,P)(y,z)-c(\tau,P)(x,z)+c(\tau,P)(x,y).$$
By the definition of the division map $d$ in \eqref{eq:bundletococycle}, the right-hand side is equal to $0$.
\end{proof}

To prove the bijection of \v{C}ech cohomology and isomorphism classes of principal bundles, we will use the language of groupoid actions as an intermediary.  The groupoids are not necessarily Lie groupoids, but are in the diffeological category.

\begin{definition}[Diffeological Groupoid]\labell{d:diffeol gpd}
A \emph{diffeological groupoid} is a groupoid $H=(H_1\rra H_0)$ in which $H_1$ and $H_0$ are diffeological spaces, and all structure maps are smooth.
\end{definition}

It follows from the definition that the unit map $u\colon H_0\to H_1$ is an induction, and consequently, the source $s$ and target $t$ maps are subductions.

\begin{definition}[Groupoid Action]\labell{d:gpd action}
Given a diffeological groupoid $H=(H_1\rra H_0)$ and a diffeological space $X$, a {\em (left) smooth action of $H$ on $X$ with (smooth) anchor map $\rho\colon X\to H_0$} is a smooth map $\act\colon H_1\ftimes{s}{q}X\to X\colon (h,x)\mapsto h\cdot x$ satisfying:
\begin{itemize}
	\item $\rho(h\cdot x)=t(h)$ for all $(h,x)$ with $s(h)=\rho(x)$,
	\item $g\cdot (h\cdot x) = (gh)\cdot x$ for all $g,h\in H_1$ and $x\in X$ with $s(g)=t(h)$ and $s(h)=\rho(x)$,
	\item $u(\rho(x))\cdot x = x$ for all $x\in X$.
\end{itemize}

Given two $H$-spaces $X$ and $Y$ with anchor maps $\rho_X\colon X\to H_0$ and $\rho_Y\colon Y\to H_0$, respectively, an {\em $H$-equivariant map} $\varphi\colon X\to Y$ is a smooth map satisfying:
\begin{itemize}
	\item $\rho_Y\circ \varphi =\rho_X$, and
	\item $\varphi(h\cdot x)=h\cdot \varphi(x)$.
\end{itemize}
(Note that the second condition implicitly requires the first.)
\end{definition}

\begin{example}\labell{x:gpd action}
Let $\pi\colon Y\to X$ be smooth.  Define a diffeological groupoid $R(\pi)=(Y\times_X Y\rra Y)$ with
\begin{itemize}
	\item source and target maps $s(y_1,y_2)=y_2$ and $t(y_1,y_2)=y_1$, respectively;
	\item multiplication $(y_1,y_2)(y_2,y_3)=(y_1,y_3)$;
	\item inversion map $(y_1,y_2)\mapsto (y_2,y_1)$; and
	\item unit map $y\mapsto (y,y)$.
\end{itemize}

It follows from Definition~\ref{d:gpd action} that any action of $R(\pi)$ on $Y\times G$ with anchor map $\pr_1\colon Y\times G\to Y$ that commutes with the action of $G$ on $Y\times G$ by (right) translation is of the form
$(y_1,y_2)\cdot(y_2,g)=(y_1,\varphi(y_1,y_2)+g)$
for some smooth $\varphi\colon Y\times_X Y\to G$ satisfying $\varphi(y_1,y_2)+\varphi(y_2,y_3)=\varphi(y_1,y_3)$.  If $Y=\calN(\calQ)$ for a generating family $\calQ$ of $X$ and $\pi=\ev$, then
for a fixed $1$-cocycle $f\in\check{C}^1(\calQ,G)$, we can define an action of $R(\ev)$ on $\calN(\calQ)\times G$ with anchor map $\pr_1\colon\calN(\calQ)\times G\to\calN(\calQ)$ by
\begin{equation}\labell{eq:cocycleaction}
(y_1,y_2)\cdot (y_2,g):= (y_1,f(y_1,y_2)+g).
\end{equation}
The following lemma clarifies the relationship between cocycles of $\check{C}^1(\calQ,G)$ and the above actions of $R(\ev)$.
\end{example}

\begin{lemma}\labell{l:groupoidrep}
Let $X$ be a diffeological space and $\calQ$ a generating family.  There is a bijection from $1$-cocycles of $\check{C}^1(\calQ,G)$ to the set of actions of $R(\ev)$ on $\calN(\calQ)\times G$ with anchor map $\pr_{\calN(\calQ)}$ that commute with the action of $G$ by right multiplication as defined in Example~\ref{x:gpd action}.  Moreover, fixing cocycles $f_1,f_2\in\check{C}^1(\calQ,G)$, there is a bijection between $0$-cochains $\alpha\in\check{C}^0(\calQ,G)$ such that $\partial\alpha=f_2-f_1$ and ($R(\ev)\text{-}G$)-equivariant maps from $\calN(\calQ)\times G$ to itself, where equivariance is with respect to the right $G$-action and the $R(\ev)$-actions induced by $f_1$ and $f_2$.
\end{lemma}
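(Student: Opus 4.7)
The plan is to prove both bijections by unwinding definitions, using commutation with right $G$-translation together with anchor preservation to pin down the data in each case to a single smooth function, and then translating the remaining axioms into the cocycle/coboundary conditions of $\check{C}^\bullet(\calQ,G)$.

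For the first bijection, suppose we are given a smooth $R(\ev)$-action on $\calN(\calQ)\times G$ with anchor $\pr_{\calN(\calQ)}$ that commutes with right $G$-translation. Writing $(y_1,y_2)\cdot(y_2,g)=(y',\nu(y_1,y_2,g))$, anchor preservation forces $y'=t(y_1,y_2)=y_1$, while commutation with right $G$-translation gives $\nu(y_1,y_2,g+h)=\nu(y_1,y_2,g)+h$ for all $h\in G$, and hence $\nu(y_1,y_2,g)=\varphi(y_1,y_2)+g$ where $\varphi(y_1,y_2):=\nu(y_1,y_2,0)$ is smooth as a composition of smooth maps. The unit axiom $(y,y)\cdot(y,g)=(y,g)$ then becomes $\varphi(y,y)=0$, and the associativity axiom becomes $\varphi(y_1,y_2)+\varphi(y_2,y_3)=\varphi(y_1,y_3)$, which is exactly the $1$-cocycle condition $\partial\varphi=0$ (specialising this identity to $y_1=y_2=y_3$ recovers $\varphi(y,y)=0$ automatically). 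Conversely, the formula in Example~\ref{x:gpd action} associates a smooth action to any $1$-cocycle, and the two assignments are manifestly mutually inverse.

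For the second bijection, fix cocycles $f_1,f_2$ with associated actions $\cdot_1$ and $\cdot_2$. Any $G$-equivariant map $\Phi\colon\calN(\calQ)\times G\to\calN(\calQ)\times G$ preserving the anchor is, by the same evaluate-at-$g=0$ argument, of the form $\Phi(y,g)=(y,\mu(y)+g)$ for a smooth $\mu\colon\calN(\calQ)\to G$, i.e.\ a $0$-cochain $\mu\in\check{C}^0(\calQ,G)$. Writing out the $R(\ev)$-equivariance condition $\Phi((y_1,y_2)\cdot_1(y_2,g))=(y_1,y_2)\cdot_2\Phi(y_2,g)$ and cancelling the common $g$ yields $\mu(y_1)+f_1(y_1,y_2)=f_2(y_1,y_2)+\mu(y_2)$; using $\partial\mu(y_1,y_2)=\mu(y_2)-\mu(y_1)$ this rearranges to $\partial(-\mu)=f_2-f_1$. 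Setting $\alpha:=-\mu$ gives the claimed identity $\partial\alpha=f_2-f_1$, and the inverse assignment $\alpha\mapsto\Phi_\alpha$ is defined by $\Phi_\alpha(y,g):=(y,g-\alpha(y))$.

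The only potential subtlety is the very first step of each bijection: confirming that anchor preservation combined with commutation with right translation really does force the action (respectively the equivariant map) to depend on the $G$-coordinate only through translation, so that all of the information is carried by a single smooth function on $\calN(\calQ)_1$ (respectively on $\calN(\calQ)$). Once this reduction is performed, all remaining verifications are routine computations inside the abelian group $G$, and the groupoid action axioms and equivariance conditions translate on the nose into the cocycle and coboundary identities of $\check{C}^\bullet(\calQ,G)$.
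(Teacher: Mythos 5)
Your proposal is correct and follows essentially the same route as the paper: the paper delegates the reduction of an anchor-preserving, $G$-commuting action to a single smooth cocycle $\varphi=\pr_G\circ\act\circ\iota$ to Example~\ref{x:gpd action}, whereas you carry out that evaluate-at-$0_G$ reduction explicitly, and your treatment of the second bijection (with $\alpha=-\mu$ so that $\Phi_\alpha(y,g)=(y,g-\alpha(y))$ and $\partial\alpha=f_2-f_1$) matches the paper's computation exactly, including signs.
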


\begin{proof}
Suppose $f_1$ and $f_2$ in $\check{C}^1(\calQ,G)$ induce the same action on $\calN(\calQ)\times G$.  Setting $g=0_G$ in \eqref{eq:cocycleaction}, it follows that $f_1=f_2$.  On the other hand, let $\act\colon\Cn(\calQ)_1\times_{\Cn(\calQ)}(\Cn(\calQ)\times G)\to \Cn(\calQ)\times G$ be an action of $R(\ev)$ on $\calN(\calQ)\times G$ with anchor map $\pr_{\calN(\calQ)}\colon\calN(\calQ)\times G\to\calN(\calQ)$ that commutes with the right action of $G$ on $\calN(\calQ)\times G$ by right multiplication.  Denote by $\pr_G\colon \Cn(\calQ)\times G\to G$ the projection map onto $G$ and $\iota\colon\Cn(\calQ)_1\to \Cn(\calQ)_1\times_{\Cn(\calQ)}(\Cn(\calQ)\times G)$ the inclusion $(y_1,y_2)\mapsto ((y_1,y_2),(y_2,0_G))$.  By Example~\ref{x:gpd action}, the action satisfies \eqref{eq:cocycleaction} with $f$ defined to be $\pr_G\circ \act \circ \iota$, establishing the desired bijection.

Any $(R(\ev)\text{-}G)$-equivariant map $\Cn(\calQ)\times G\to \Cn(\calQ)\times G$ associated to the induced actions of $1$-cocycles $f_1,f_2\colon\Cn(\calQ)_1\to G$ is necessarily of the form $(y,g)\mapsto (y,g-\alpha(y))$ for some $0$-cochain $\alpha\colon\Cn(\calQ)\to G$.  This correspondence is injective.  On the other hand, fix a $0$-cochain $\alpha\colon\calN(\calQ)\to G$, and let $\tilde{\alpha}\colon \Cn(\calQ)\times G\to \Cn(\calQ)\times G$ be the smooth map $(y,g)\mapsto(y,g-\alpha(y))$.  Then 
\begin{align*}
\tilde{\alpha}((y_1,y_2)\cdot (y_2,g)) &=  (y_1,f_1(y_1,y_2)+g-\alpha(y_1)) \quad \text{and}\\
(y_1,y_2)\cdot\tilde{\alpha}(y_2,g) &= (y_1,f_2(y_1,y_2)+g-\alpha(y_2)).
\end{align*}
Hence, $\tilde{\alpha}$ is equivariant if and only if $f_1(y_1,y_2)-\alpha(y_1) = f_2(y_1,y_2)-\alpha(y_2)$ for all $(y_1,y_2)$; that is, if and only if $f_2-f_1=\partial\alpha$.  This shows surjectivity, completing the proof.
\end{proof}

\begin{remark}\labell{r:groupoidrep}
The $(R(\ev)\text{-}G)$-equivariant maps of Lemma~\ref{l:groupoidrep} are in fact diffeomorphisms. One can rephrase this lemma in the language of diffeological groupoids: the category of diffeological left principal bibundles from $R(\ev)$ to the group $G$ (viewed as the groupoid $G\toto *$), with ($R(\ev)$-$G$)-equivariant diffeomorphisms between the bibundles as arrows, is categorically equivalent to the category whose objects are $1$-cocycles of $\check{C}^1(\calQ,G)$, and arrows between $f_1,f_2\in\check{C}^1(\calQ,G)$ are $0$-cochains $\alpha$ in $\check{C}^0(\calQ,G)$ such that $f_2-f_1=d\alpha$.  For more information on diffeological groupoids, see \cite{vdS}.
\end{remark}

\begin{lemma}\labell{l:pbtogroupoid}
Let $\pi\colon P\to X$ be a principal $G$-bundle with plotwise local trivialization $\tau\colon\Cn(\calQ)\to P$.  The map
\[\varrho\colon\Cn(\calQ)\times G\to P\colon(x,g)\mapsto \tau(x)\cdot g\]
is a quotient map for the groupoid action given by Equation~\eqref{eq:cocycleaction} with $f=c(\tau,P)$.
\end{lemma}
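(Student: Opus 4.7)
My plan is to show that $\varrho$ is smooth, surjective, invariant under the $R(\ev)$-action determined by $c(\tau,P)$, that its fibres coincide with the orbits of this action, and that it is a subduction. Together these facts say precisely that $\varrho$ is a quotient map for the groupoid action. Smoothness of $\varrho$ is immediate since it is a composition of smooth maps: the product $\tau\times\id_G$ followed by the $G$-action on $P$.

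The heart of the verification is a single computation linking $c(\tau,P)$ to the division map. Recall that for any $(y_1,y_2)\in\calN(\calQ)_1$, the definition of $d$ in Proposition~\ref{p:bdle prop} gives $\tau(y_1)\cdot c(\tau,P)(y_1,y_2) = \tau(y_1)\cdot d(\tau(y_1),\tau(y_2)) = \tau(y_2)$. Hence, using that $G$ is abelian and acts associatively,
\[
\varrho\bigl((y_1,y_2)\cdot(y_2,g)\bigr) = \tau(y_1)\cdot\bigl(c(\tau,P)(y_1,y_2)+g\bigr) = \tau(y_2)\cdot g = \varrho(y_2,g),
\]
so $\varrho$ is $R(\ev)$-invariant. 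For surjectivity: any $p\in P$ lies in the fibre $\pi^{-1}(\pi(p))$; pick $y\in\calN(\calQ)$ with $\ev(y)=\pi(p)$ (possible since $\ev$ is a subduction), and use Proposition~\ref{p:bdle prop}\eqref{i:torsor fiber} to obtain the unique $g\in G$ with $\tau(y)\cdot g = p$.

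For the fibre-orbit identification, suppose $\varrho(y_1,g_1)=\varrho(y_2,g_2)$. Applying $\pi$ yields $\ev(y_1)=\ev(y_2)$, so $(y_1,y_2)\in\calN(\calQ)_1$. Then $\tau(y_1)\cdot g_1 = \tau(y_2)\cdot g_2 = \tau(y_1)\cdot(c(\tau,P)(y_1,y_2)+g_2)$, and freeness of the $G$-action forces $g_1 = c(\tau,P)(y_1,y_2)+g_2$, i.e.\ $(y_1,g_1)=(y_1,y_2)\cdot(y_2,g_2)$. Combined with invariance, the fibres of $\varrho$ are exactly the groupoid orbits.

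The step I expect to require the most care is verifying that $\varrho$ is a subduction, as this must be done diffeologically. Given a plot $\sigma\colon U\to P$ and $u_0\in U$, the composite $\pi\circ\sigma$ is a plot of $X$; by the generating family property, shrink to a neighbourhood $V$ of $u_0$ on which $\pi\circ\sigma|_V = q\circ f$ for some $q\in\calQ$ and smooth $f\colon V\to U_q$. Let $\tilde f\colon V\to\calN(\calQ)$ be the induced plot into the appropriate summand. Then $\pi\circ\tau\circ\tilde f = \ev\circ\tilde f = \pi\circ\sigma|_V$, so $(\tau\circ\tilde f,\sigma|_V)$ lands in $P\times_X P$; applying the smooth division map produces $g_V := d\circ(\tau\circ\tilde f,\sigma|_V)\colon V\to G$, and by construction $\varrho\circ(\tilde f,g_V) = \sigma|_V$. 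This furnishes a local smooth lift of $\sigma$ on a neighbourhood of every point of $U$, completing the subduction check and the proof.
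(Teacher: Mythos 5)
Your proposal is correct and follows essentially the same route as the paper: invariance and the fibre--orbit identification via the defining property of the division map, and the subduction property by locally factoring $\pi\circ\sigma$ through the generating family and applying $d$ to produce the $G$-component of a lift. The only cosmetic difference is that the paper packages the subduction step through the trivialization $\calN(\calQ)\times G\cong\calN(\calQ)\times_X P$ of Proposition~\ref{p:loc triv prop}, whereas you build the local lifts directly; the underlying computation is the same.
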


\begin{proof}
By Claim~\ref{i:pbtriv} of Proposition~\ref{p:loc triv prop} (and its proof), $\ev^*P:=\Cn(\calQ)\times_{X}P\to \Cn(\calQ)$ is trivializable, with global section $\sigma\colon\Cn(\calQ)\to \Cn(\calQ)\times_{X}P\colon y\mapsto (y,\tau(y))$ and $G$-equivariant diffeomorphism $\varphi\colon\Cn(\calQ)\times G\to \Cn(\calQ)\times_X P, \, (y,g)\mapsto(y,\tau(y)\cdot g)$.  Let $p\colon U\to P$ be a plot.  There exist an open cover $\{U_\alpha\}$ of $U$ for which $U_\alpha$ is connected for each $\alpha$, a family $\{q_\alpha\}$ of plots in $\calQ$, and a family of smooth functions $\{f_\alpha\}$ such that $\pi\circ p|_{U_\alpha}=q_\alpha\circ f_\alpha$ for each $\alpha$.  The plot $(f_\alpha,p|_{U_\alpha})$ is a lift of $p|_{U_\alpha}$ to $\calN(\calQ)\times_X P$.  It follows that $\varrho=\pr_2\circ\varphi$ is a subduction.

To show that $\varrho$ is the quotient map of an $R(\ev)$-action, it remains to show that the fibers of $\varrho$ are precisely the orbits of the action of $R(\ev)$ on $\Cn(\calQ)\times G$.  Suppose $\varrho(y,g)=\varrho(y',g')$.  Then $(y,y')\in R(\ev)$, and $f(y,y')=g-g'$, and so $(y,y')\cdot(y',g')=(y,g)$.  The fibers of $\varrho$ are contained in orbits of $R(\ev)$.  On the other hand,  $\varrho$ is $R(\ev)$-invariant:
\[\varrho((y',y)\cdot (y,g)) = \varrho(y',c(P,\tau)(y',y)+g) = \tau(y')\cdot (c(P,\tau)(y',y)+g) = \tau(y)\cdot g = \varrho(y,g).\]
\end{proof}

\begin{lemma}\labell{l:groupoidtopb}
Given a subduction $\varpi\colon Y \to X$, and an action of $R(\varpi)$ on $Y\times G$ with anchor map $\pr_1\colon Y\times G\to Y$ that commutes with the action of $G$ by multiplication on the right, the map $\pi\colon P:=(Y\times G)/R(\varpi)\to X$ sending $[y,g]$ to $\varpi(y)$ is a principal $G$-bundle.  Moreover, if $\Phi\colon Y\times G\to Y\times G$ is an ($R(\varpi)\text{-}G$)-equivariant map with respect to two such actions of $R(\varpi)$ on $Y\times G$, and right multiplication by $G$, then $\Phi$ descends to an isomorphism between the resulting principal $G$-bundles.
\end{lemma}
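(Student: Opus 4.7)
The plan is to first extract a concrete form for the $R(\varpi)$-action on $Y\times G$, use it to build the principal $G$-bundle structure on $P := (Y\times G)/R(\varpi)$, and then observe that the descent of $\Phi$ is essentially automatic.  Exactly as in Example~\ref{x:gpd action}, any action of $R(\varpi)$ on $Y\times G$ with anchor $\pr_1$ commuting with right $G$-multiplication has the form $(y_1, y_2)\cdot(y_2, g) = (y_1, \varphi(y_1, y_2)+g)$ for a smooth $\varphi\colon Y\times_X Y\to G$ satisfying the cocycle identity $\varphi(y_1, y_2)+\varphi(y_2, y_3) = \varphi(y_1, y_3)$ and (by the unit axiom) $\varphi(y, y) = 0$; this $\varphi$ is the key gadget in what follows.

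For the bundle structure, right $G$-multiplication descends along the quotient subduction $\varrho\colon Y\times G\to P$ to a smooth $G$-action on $P$, and the $R(\varpi)$-invariant map $\varpi\circ \pr_1\colon Y\times G\to X$ descends to a map $\pi\colon P\to X$.  Since $\varpi\circ\pr_1$ factors as $\pi\circ\varrho$ and both $\varrho$ and $\varpi\circ\pr_1$ are subductions, $\pi$ is itself a subduction.  Next, smoothness of the action map $a\colon P\times G\to P\times_X P$ is automatic; bijectivity follows from $\varphi(y, y)=0$ (giving freeness) and from the identity $[y_2, g_2] = [y_1, \varphi(y_1, y_2)+g_2]$ whenever $\varpi(y_1)=\varpi(y_2)$ (giving fiberwise transitivity).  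To invert $a$ smoothly, I would define $\tilde d\colon (Y\times G)\times_X(Y\times G)\to G$ by $\tilde d(y_1, g_1, y_2, g_2) := \varphi(y_1, y_2)+g_2-g_1$; a short calculation using the cocycle identity shows $\tilde d$ is invariant under the product groupoid action, so it descends set-theoretically to $d\colon P\times_X P\to G$ with $a^{-1}(p_1, p_2) = (p_1, d(p_1, p_2))$.

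The step I expect to be the main obstacle is verifying smoothness of this $d$, because $P\times_X P$ carries the subset diffeology inherited from $P\times P$ rather than a quotient diffeology obtained from $(Y\times G)\times_X(Y\times G)$ directly.  I would resolve this by a local lifting argument: given a plot $(p_1, p_2)\colon U\to P\times_X P$, subductivity of $\varrho$ provides, on a neighbourhood $V\subseteq U$ of any chosen point, lifts $\tilde p_i\colon V\to Y\times G$ with $\varrho\circ\tilde p_i = p_i|_V$; the condition $\pi\circ p_1 = \pi\circ p_2$ places $(\tilde p_1, \tilde p_2)$ in the fiber product $(Y\times G)\times_X (Y\times G)$, so $d\circ(p_1, p_2)|_V = \tilde d\circ(\tilde p_1, \tilde p_2)$ is smooth.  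For the second statement, an $(R(\varpi)\text{-}G)$-equivariant $\Phi$ satisfies $\pr_1\circ\Phi = \pr_1$ by the anchor condition and intertwines both the $R(\varpi)$- and $G$-actions, so by the universal property of the quotient diffeology it descends to a smooth $G$-equivariant map $\bar\Phi\colon P_1\to P_2$ over $X$.  Since $\Phi$ is a diffeomorphism with $\Phi^{-1}$ likewise $(R(\varpi)\text{-}G)$-equivariant (Remark~\ref{r:groupoidrep}), $\Phi^{-1}$ descends to an inverse of $\bar\Phi$, yielding a bundle isomorphism.
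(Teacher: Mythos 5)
Your proposal is correct, and its treatment of the key point differs from the paper's. Both arguments begin the same way: extract the smooth cocycle $\varphi\colon Y\times_X Y\to G$ from the action as in Example~\ref{x:gpd action}, descend the $G$-action and the map $\varpi\circ\pr_1$ to $P$, and note $\pi$ is a subduction with fibers the $G$-orbits. Where you diverge is in proving that the action map $a\colon P\times G\to P\times_X P$ is a diffeomorphism. You construct the inverse directly: you define a division map $\tilde d(y_1,g_1,y_2,g_2)=\varphi(y_1,y_2)+g_2-g_1$ on $(Y\times G)\times_X(Y\times G)$, check its invariance via the cocycle identity so that it descends to $d\colon P\times_X P\to G$, and establish smoothness of $d$ by locally lifting plots of $P\times_X P$ through the subduction $\varrho$ --- correctly observing that the two separate lifts automatically land in the fiber product over $X$ because $\pi\circ p_1=\pi\circ p_2$. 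The paper instead descends the action map of the trivial bundle $Y\times G\to Y$ to a diffeomorphism $P\times G\to\bigl((Y\times G)\times_Y(Y\times G)\bigr)/R(\varpi)$ and then separately builds mutually inverse smooth maps $\zeta$ and $\xi$ identifying this quotient of the fiber product with $P\times_X P$, with $\xi$ obtained by a two-stage descent again using the cocycle identity. Your route is the more elementary of the two: it isolates the only genuinely delicate issue (smoothness of a single $G$-valued map on a subset-diffeology fiber product) and resolves it with one local-lifting argument, whereas the paper's route avoids writing down $a^{-1}$ explicitly at the price of proving the auxiliary identification of quotients of fiber products with fiber products of quotients, a fact of some independent interest. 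Your handling of the second statement (descent of $\Phi$ and of $\Phi^{-1}$) matches the paper's; the only cosmetic caveat is that Remark~\ref{r:groupoidrep} is stated for $R(\ev)$ on a nebula, so you should say, as the paper does, that the argument there adapts verbatim to a general subduction $\varpi$ to give $\Phi(y,g)=(y,g-\alpha(y))$.
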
   

\begin{proof}
Let $\varrho\colon Y\times G\to P$ be the quotient map of the $R(\varpi)$-action.  Then $\pi$ is well-defined and satisfies $\pi\circ \varrho=\varpi\circ\pr_1$, from which it follows that $\pi$ is smooth.  Moreover, since $\varpi$ and $\pr_1$ are subductions, it also follows that $\pi$ is a subduction. Since the action of $G$ commutes with the action of $R(\varpi)$ on $Y\times G$, the $G$-action descends via $\varrho$ to an action on $P$ whose orbits are exactly the fibers of $\pi$. 

The right $G$-action on the trivial principal $G$-bundle $\pr_1\colon Y\times G\to Y$ has action map
\[a\colon (Y\times G)\times G \to (Y\times G)\times_Y (Y\times G)\colon((y,g),h)\mapsto((y,g),(y,g+h)),\]
which is a diffeomorphism.  Moreover, $R(\varpi)$ acts on the domain of $a$ (it acts trivially on the second copy of $G$) and diagonally on the codomain, and $a$ is $R(\varpi)$-equivariant with respect to these actions.   Thus $a$ descends to a diffeomorphism $$[a]\colon((Y\times G)\times G)/R(\varpi)\cong P\times G\to((Y\times G)\times_Y (Y\times G))/R(\varpi),$$ where we identify $[(y,g),h]\in((Y\times G)\times G)/R(\varpi)$ with $([y,g],h)\in P\times G$.  

We claim that  $((Y\times G)\times_Y (Y\times G))/R(\varpi)$ is diffeomorphic to $P\times_X P$.  The smooth map $((y,g_1),(y,g_2))\mapsto([y,g_1],[y,g_2])$ is $R(\varpi)$-invariant, and so descends to a smooth map $\zeta\colon ((Y\times G)\times_Y(Y\times G))/R(\varpi)\to P\times_X P$.  On the other hand, consider the smooth map $\tilde{\xi}\colon(Y\times G)\ftimes{\varpi\circ\pr_1}{\varpi\circ\pr_1}(Y\times G)\to (Y\times G)\times_Y(Y\times G)$ defined by sending $((y_1,g_1),(y_2,g_2))$ to $((y_1,g_1),(y_1,\varphi(y_1,y_2)+g_2))$ where $\varphi$ is the $1$-cocycle $Y\times_X Y\to G$ corresponding to the $R(\varpi)$-action (see Example~\ref{x:gpd action}).  $R(\varpi)\times R(\varpi)$ has a natural action on the domain, while $R(\varpi)$ acts diagonally on the codomain. Since $\varphi$ is a $1$-cocycle, $\tilde{\xi}$ is $R(\varpi)$-equivariant with respect to the first copy of $R(\varpi)$ acting on the domain. Thus $\widetilde{\xi}$ descends to a smooth map $\overline{\xi}\colon P\ftimes{\pi}{\varpi\circ\pr_1}(Y\times G)\to((Y\times G)\times_Y(Y\times G))/R(\varpi)$, and the action of the second copy of $R(\varpi)$ descends to an action on $P\ftimes{\pi}{\varphi\circ\pr_1}(Y\times G)$.  Again, since $\varphi$ is a $1$-cocycle, $\overline{\xi}$ is invariant with respect to the descended action of $R(\varpi)$ on its domain.  Thus $\overline{\xi}$ descends to a smooth map $\xi\colon P\times_X P\to ((Y\times G)\times_Y(Y\times G))/R(\varpi)$. Since $\zeta$ and $\xi$ are inverses of each other, this proves our claim.

The composition $\zeta\circ[a]\colon P\times G\to P\times_X P$ sending $([y,g],h)$ to $([y,g],[y,g+h])$ is a diffeomorphism.  As this is the action map of the $G$-action on $P$, we conclude that $\pi\colon P\to X$ is a principal $G$-bundle.

For the last statement, let $R(\varpi)$ act on $Y\times G$ in two ways, commuting with the $G$-action, with an ($R(\varpi)\text{-}G$)-equivariant map $\Phi\colon Y\times G\to Y\times G$.  An argument similar to that in the proof of Lemma~\ref{l:groupoidrep} and Remark~\ref{r:groupoidrep} shows that there is a smooth $\alpha\colon Y\to G$ such that $\Phi(y,g)=(y,g-\alpha(y)),$ and $\Phi$ must be a diffeomorphism.  Thus, $\Phi$ descends to a diffeomorphism $[y,g]\mapsto[y,g-\alpha(y)]$ between the respective quotients by $R(\varpi)$.
\end{proof}

Finally, we prove the main result of the paper.

\begin{theorem}\labell{t:main}
Let $X$ be a diffeological space and $G$ an abelian diffeological group.  There is a natural bijection between the isomorphism classes of principal $G$-bundles on $X$ and the group of \v{C}ech cohomology classes $\check{H}^1(X,G)$.
\end{theorem}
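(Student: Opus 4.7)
The plan is to build mutually inverse maps between the set of isomorphism classes of principal $G$-bundles over $X$ and the group $\check{H}^1(X,G)$, using the lemmas in this section as black boxes, and then check naturality in $X$.

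First, I define $\Psi$ from isomorphism classes to $\check{H}^1(X,G)$ as follows. Given $\pi\colon P\to X$, Proposition~\ref{p:loc triv prop} provides a plotwise local trivialization $\tau\colon\calN(\calQ)\to P$; Construction~\ref{cons:cocyclefrombundle} and Lemma~\ref{l:cocycle} then produce a cocycle $c(\tau,P)\in\check{C}^1(\calQ,G)$, whose class lives in $\check{H}^1(\calQ,G)$ and hence, via the structure maps of the colimit, in $\check{H}^1(X,G)$. Well-definedness breaks into three checks. If $\tau,\tau'\colon\calN(\calQ)\to P$ are two plotwise local trivializations over the same $\calQ$, then because $\ev^*P$ is trivializable (Proposition~\ref{p:loc triv prop}(\ref{i:pbtriv})) there exists a smooth $\alpha\colon\calN(\calQ)\to G$ with $\tau'(y)=\tau(y)\cdot\alpha(y)$, and a direct calculation using the division map gives $c(\tau',P)-c(\tau,P)=\partial\alpha$. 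Two different generating families are compared via a common refinement (Lemma~\ref{l:alwayscrs}): the pullback of each trivialization to $\calN(\calS)$ is again a plotwise local trivialization, so the two associated cocycles are cohomologous in $\check{C}^\bullet(\calS,G)$ by the previous step, and hence represent the same element of $\check{H}^1(X,G)$. Finally, any bundle isomorphism transports $\tau$ to a plotwise local trivialization of the target and intertwines division maps, so $\Psi$ is well-defined on isomorphism classes.

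I define the inverse $\Phi\colon\check{H}^1(X,G)\to\{\text{iso classes}\}$ by representing a class by a cocycle $f\in\check{C}^1(\calQ,G)$, applying Lemma~\ref{l:groupoidrep} to get an $R(\ev)$-action on $\calN(\calQ)\times G$ commuting with right translation by $G$, and then applying Lemma~\ref{l:groupoidtopb} to form the principal $G$-bundle $P_f:=(\calN(\calQ)\times G)/R(\ev)\to X$. Well-definedness follows from the second assertions of both lemmas: a coboundary change $f'=f+\partial\alpha$ corresponds under Lemma~\ref{l:groupoidrep} to an $(R(\ev)\text{-}G)$-equivariant diffeomorphism, which by Lemma~\ref{l:groupoidtopb} descends to an isomorphism $P_f\cong P_{f'}$ of principal bundles. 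For refinement invariance, given a refinement $\calR$ of $\calQ$ with induced map $h\colon\calN(\calR)\to\calN(\calQ)$, the smooth $G$-equivariant map $(y,g)\mapsto(h(y),g)$ intertwines the two groupoid actions and descends to a $G$-bundle isomorphism over $X$; a further common refinement handles any pair of representatives in the colimit.

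It remains to check $\Phi\circ\Psi=\id$ and $\Psi\circ\Phi=\id$. The first is exactly Lemma~\ref{l:pbtogroupoid}: the map $(x,g)\mapsto\tau(x)\cdot g$ presents $P$ as the quotient of $\calN(\calQ)\times G$ by the $R(\ev)$-action induced by $c(\tau,P)$. For the second, start from $f\in\check{C}^1(\calQ,G)$, form $P_f$, and take the tautological plotwise local trivialization $\tau_f(y):=[y,0_G]$; writing out the division map in $P_f$ and using the identities $f(y,y)=0$ and $f(y_2,y_1)=-f(y_1,y_2)$, which follow from $\partial f=0$, yields $c(\tau_f,P_f)=f$. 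For naturality, given smooth $\varphi\colon Y\to X$ and a plotwise local trivialization $\tau\colon\calN(\calQ)\to P$, one uses the refined generating family $\calR'$ of $Y$ and smooth map $\psi\colon\calN(\calR')\to\calN(\calQ)$ constructed in Proposition~\ref{p:functorial}, sets $\tilde\tau(y):=(\tau(\psi(y)),\ev_{\calR'}(y))\in\varphi^*P$, and verifies by direct computation that $c(\tilde\tau,\varphi^*P)=c(\tau,P)\circ\psi^1$, which is precisely the chain-level representative defining $\varphi^*$ on $\check{H}^1$ (using $\varphi^*C^\infty(\cdot,G)=C^\infty(\cdot,G)$ from Proposition~\ref{p:pullbackpracticalities}). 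The main obstacle throughout is organizational bookkeeping across the colimit over refinements; the substantive diffeological content is already packaged into Lemmas~\ref{l:groupoidrep},~\ref{l:pbtogroupoid}, and~\ref{l:groupoidtopb}, so no new analysis is required.
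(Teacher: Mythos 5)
Your proposal is correct and follows essentially the same route as the paper's proof: the cocycle $c(\tau,P)$ from a plotwise local trivialization, well-definedness via the division map, common refinements, and the colimit structure, with Lemmas~\ref{l:groupoidrep}, \ref{l:pbtogroupoid}, and \ref{l:groupoidtopb} supplying the inverse construction and the two composite identities, and Proposition~\ref{p:functorial} supplying naturality. The only difference is organizational -- you package the inverse as an explicit map $\Phi$ and verify both composites, whereas the paper argues surjectivity and injectivity of $\Psi$ directly -- but the mathematical content is identical.
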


\begin{proof}
Fix a principal bundle $\pi\colon P\to X$ with plotwise local trivialization $\tau\colon\calN(\calQ)\to P$.  Let $\tau'\colon\calN(\calQ)\to P$ be another plotwise local trivialization and $\alpha\colon\calN(\calQ)\to G$ the map $d\circ(\tau,\tau')$, where $d$ is the division map.  For any $(x,y)\in\calN(\calQ)_1$,
\[\tau'(x)\cdot (c(\tau,P)(x,y)+\partial \alpha(x,y)) = \tau'(y),\]
and so $c(\tau',P)=c(\tau,P)+\partial \alpha$.  Therefore, given a generating family $\calQ$ admitting a plotwise local trivialization of $P$, there is an associated class $[c(\tau,P)]$ independent of the plotwise local trivialization in $\check{H}^1(\calQ,G)$.

Let $\calR$ be a refinement of $\calQ$ with induced map of nebulae $g\colon\Cn(\calR)\to \Cn(\calQ)$.  By Claim~\ref{i:ev comm} of Lemma~\ref{l:refinement}, $\ev\circ g = \ev'$, and given a plotwise local trivialization $\tau$ with respect to $\calQ$ we obtain a local trivialization $\tau\circ g$ with respect to $\calR$. By \eqref{eq:bundletococycle}, the map $g^1\colon\check{C}^1(\calQ,G)\to \check{C}^1(\calR,G)\colon f\mapsto f\circ(g,g)$ as in the proof of Lemma~\ref{l:refinementtocoho} satisfies $g^1(c(\tau,P)) = c(\tau\circ g,P)$, and so the induced map on cohomology $\check{g}\colon\check{H}^1(\calQ,G)\to \check{H}^1(\calR,G)$ sends $[c(\tau,P)]$ to $[c(\tau\circ g,P)]$.  Recall that $\check{H}^1(X,G)$ is defined to be the colimit over all induced morphisms $\check{g}$ coming from all generating families (see Definition~\ref{d:cech2}). It follows that the class $[c(\tau,P)]$ induces a class $c[P]\in \check{H}^1(X,G)$ that is independent of generating families and plotwise local trivializations.

Let $\pi'\colon P'\to X$ be a principal $G$-bundle and $\varphi\colon P\to P'$ an isomorphism.  Then $\tau':=\varphi\circ\tau$ is a local trivialization for $P'$, and $c(\tau,P)=c(\tau',P')$.  Therefore, the assignment $P\mapsto c[P]$ descends to a well-defined function $\Psi$ from isomorphism classes of principal $G$-bundles over $X$ to $\check{H}^1(X,G)$.  

Fix a class $\omega\in\check{H}^1(X,G)$.  Let $\calQ$ be a generating family and $f\in\check{C}^1(\calQ,G)$ a cocycle so that $[f]$ is a representative of $\omega$.  By Lemma~\ref{l:groupoidrep}, $f$ induces an action of the groupoid $R(\ev\colon\calN(\calQ)\to X)$ on $\calN(\calQ)\times G$.  By Lemma~\ref{l:groupoidtopb}, the quotient by this action is a principal bundle $\pi\colon P:=(\calN(\calQ)\times G)/R(\ev)\to X$.  Let $\tau=\varrho\circ i$ where $i\colon\calN(\calQ)\to\calN(\calQ)\times G$ is the inclusion $y\mapsto (y,0_G)$, and $\varrho$ is the quotient map of the $R(\ev)$-action.  Since $\pi\circ\tau=\ev$, it follows that $\tau$ is a plotwise local trivialization of $P$ and by Lemma~\ref{l:pbtogroupoid}, $c(\tau,P)=f$.  It follows from above that $c[P]$ is well-defined and equal to $\omega$.  Thus $\Psi$ is surjective.{}

Let $\pi\colon P\to X$ and $\varpi\colon P'\to X$ be principal $G$-bundles such that $c[P]=c[P']$.  By Lemma~\ref{l:alwayscrs}, there exists a generating family $\calQ$ admitting plotwise local trivializations $\tau\colon\Cn(\calQ)\to P$ and $\tau'\colon\Cn(\calQ)\to P'$ such that $[c(\tau,P)]=[c(\tau',P')]$; that is, there is some $0$-cochain $\alpha$ such that $c(\tau',P')-c(\tau,P)=\del\alpha$.  By Lemma~\ref{l:groupoidrep} and Remark~\ref{r:groupoidrep}, there is an ($R(\ev)\text{-}G$)-equivariant diffeomorphism from $\calN(\calQ)\times G$ to itself sending $(x,g)$ to $(x,g-\alpha)$ that intertwines the two actions on $\calN(\calQ)\times G$ induced by $c(\tau,P)$ and $c(\tau',P')$.  By Lemmas~\ref{l:pbtogroupoid} and \ref{l:groupoidtopb}, $P$ and $P'$ are isomorphic to quotients of the actions of $R(\ev\colon\calN(\calQ)\to X)$ on $\Cn(\calQ)\times G$ induced by $c(\tau,P)$ and $c(\tau',P')$, respectively, and the $(R(\ev)\text{-}G$)-equivariant map from $\calN(\calQ)\times G$ to itself descends to an isomorphism between $P$ and $P'$.  Thus $\Psi$ is bijective.

Let $\varphi\colon Y\to X$ be a smooth map and $\pi\colon P\to X$ a principal $G$-bundle.  By Proposition~\ref{p:functorial} (and its proof), there is a plotwise local trivialization $\tau\colon\calN(\calR)\to P$, a generating family $\calQ$ of $Y$, and a smooth map $\psi\colon\calN(\calQ)\to\calN(\calR)$ such that $\ev\circ\psi=\varphi\circ\ev$ and $\varphi^*c[P]$ is represented by $\check{\psi}([c(\tau,P)])$.   Define $\tau':=(\ev,\tau\circ\psi)\colon\calN(\calQ)\to\varphi^*P$.  Then $\tau'$ is a plotwise local trivialization of $\varphi^*P$, and it is straightforward to check that $c(\tau',\varphi^*P)=\psi^\sharp(c(\tau,P))$.  Naturality of $\Psi$ follows.
\end{proof}
 
\snote{Added this Remark.  Decided to go for ``completeness'' without actually reproving everything, so feel free to reduce its length as you see fit.}
\begin{remark}
As in the case of smooth manifolds (see, for instance, \cite[Remark 4.54]{RaeburnWilliams}), Theorem~\ref{t:main} can be generalized to the case of a non-abelian diffeological group $G$ with a few adjustments.  First, for $\calQ$ a generating family of a diffeological space $X$, we call a map $\varphi:\Cn(\calQ)_1\to G$ a {\em generalized cocycle} if, for any $x$, $y$, and $z$ in the same fiber of $\ev_2$,
\[\varphi(x,y)\cdot \varphi(y,z)=\varphi(x,z)\]
for ``$\cdot$'' multiplication in $G$.  As Construction~\ref{cons:cocyclefrombundle} does not require $G$ to be abelian, we may define the generalized cocycle $c(\tau, P):\Cn(\calQ)_1\to G$ for any principal bundle $P$ trivialized by $\tau:\Cn(\calQ)\to P$.

One may similarly generalize coboundaries: say that two generalized cocycles $\varphi,\psi:\Cn(\calQ)_1\to G$ are equivalent when there exists $\alpha:\Cn(\calQ)\to G$ so that, for $x$ and $y$ in the same fiber of $\ev_1$, we have that 
\[\varphi(x,y)\cdot \alpha(y)=\alpha(x)\cdot \psi(x,y).\] 
Thus, for each generating family $\calQ$ of $X$, we get a set of equivalence classes of cocycles modulo this equivalence relation.  Unlike the abelian case, this set of classes does inherit a group structure for non-abelian $G$.  However, one may still generalize the argument proving Theorem~\ref{t:main} to a natural bijection between the set of isomorphism classes of principal $G$-bundles over any diffeological space $X$ and the colimit of the sets of equivalence classes of generalized cocycles described above.
\end{remark}

It turns out that we do not need to use the colimit $\check{H}(X;G)$ to compute the \v{C}ech cohomology of $X$, but instead a suitable generating family.

\begin{corollary}\labell{c:contr gen fam}
Let $X$ be a diffeological space with generating family $\calQ$ whose plots have contractible domains.  Then $\check{H}^1(X,G)$ is in bijection with actions of $R(\ev\colon\calN(\calQ)\to X)$ on $\calN(\calQ)\times G$ that commute with right multiplication by $G$, up to $(R(\ev)\text{-}G)$-equivariance.
\end{corollary}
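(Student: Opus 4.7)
The plan is to reduce the corollary to showing that the natural map $\check{H}^1(\calQ,G)\to\check{H}^1(X,G)$ into the direct limit of Definition~\ref{d:cech2} is a bijection for this specific $\calQ$.  Granted this, Lemma~\ref{l:groupoidrep} together with Remark~\ref{r:groupoidrep} identifies $\check{H}^1(\calQ,G)$ with the desired set of $(R(\ev)\text{-}G)$-equivariance classes of actions of $R(\ev)$ on $\calN(\calQ)\times G$ commuting with right multiplication, completing the proof.

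To establish this bijection, I plan to route through principal $G$-bundles using Theorem~\ref{t:main}.  The construction $\omega\mapsto P$ from the surjectivity step of that proof already produces, from each class in $\check{H}^1(\calQ,G)$, a principal $G$-bundle over $X$; and the injectivity step (applied with $\calQ$ itself in place of the common refinement $\calS$ produced there) shows that the resulting map $\check{H}^1(\calQ,G)\to\{\text{iso classes of }G\text{-bundles}\}$ is injective.  Composing with the inverse of the bijection $\Psi$ of Theorem~\ref{t:main}, it therefore suffices to show this map is also surjective, i.e.,\ that every principal $G$-bundle $\pi\colon P\to X$ admits a plotwise local trivialization $\tau\colon\calN(\calQ)\to P$ relative to this specific $\calQ$; for then $c(\tau,P)\in\check{C}^1(\calQ,G)$ is a cocycle whose class is sent to $[P]$.

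Since $\calN(\calQ)=\coprod_{q\in\calQ}U_q$, constructing such a $\tau$ reduces to constructing a smooth section $\sigma_q\colon U_q\to q^*P$ for each $q\in\calQ$.  By the discussion following Proposition~\ref{p:loc triv prop}, each pullback $q^*P\to U_q$ is a principal $G$-bundle that is locally trivial in the classical sense on the manifold $U_q$.  Since $U_q$ is a contractible, smoothly paracompact open subset of Euclidean space, the classifying-space results of \cite{MagnotWatts,ChristensenWu} identify isomorphism classes of such bundles over $U_q$ with smooth homotopy classes $[U_q,BG]$, and contractibility of $U_q$ forces every such class to be represented by a constant map --- yielding the desired $\sigma_q$.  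I expect this trivialization step to be the main obstacle: for an arbitrary diffeological group $G$ one cannot rely on a purely topological homotopy argument, and one must invoke the existence of a smooth classifying space in combination with the smooth paracompactness of each $U_q$ to conclude triviality.
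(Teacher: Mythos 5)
Your proposal is correct and follows essentially the same route as the paper: contractibility of the plot domains guarantees every principal $G$-bundle admits a plotwise local trivialization over this fixed $\calQ$, so the arguments of Theorem~\ref{t:main} already identify isomorphism classes of bundles with $\check{H}^1(\calQ,G)$, and Lemma~\ref{l:groupoidrep} converts this to the stated bijection with groupoid actions. The only difference is that the paper simply asserts the existence of a section of $\ev^*P\to\calN(\calQ)$ from contractibility, whereas you justify it via the classifying-space results of \cite{MagnotWatts,ChristensenWu} applied to each $q^*P\to U_q$ --- a legitimate (and arguably more careful) way to fill in that step for a general diffeological group $G$.
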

{}
\begin{proof}
Fix a principal $G$-bundle $\pi\colon P\to X$.  Since the domain of each plot of $\calQ$ is contractible, there is a section $\calN(\calQ)\to\ev^*P$ which, when composed with the projection $\ev^*P\to P$, yields a plotwise local trivialization $\tau$ of $P$.  Such a $\tau$ can be obtained for any principal $G$-bundle $P$, from which it follows that isomorphism classes are in bijection with $\check{H}^1(\calQ,G)$.  The result follows from Lemma~\ref{l:groupoidrep}.
\end{proof}

\begin{remark}\labell{r:contr gen fam}
The bijection in Corollary~\ref{c:contr gen fam} is not natural.  However, given a smooth map $\varphi\colon X\to Y$ and generating families $\calQ$ of $X$ and $\calR$ of $Y$, both of which have plots with contractible domains, there is a refinement $\calQ'$ of $\calQ$ whose plots have contractible domains and for which $\varphi$ induces a map $\psi\colon\calN(\calQ')\to\calN(\calR)$ so that $\ev\circ\psi=\varphi\circ\ev$.  The map $\psi$ induces a smooth map $\psi^1\colon R(\ev\colon\calN(\calQ')\to X)\to R(\ev\colon\calN(\calR)\to Y)$, from which it follows that $\psi\times\id_G$ intertwines the two groupoid actions, and is $G$-equivariant.
\end{remark}

\section{Applications \& Examples}\labell{s:apps}

In this section, we first connect the diffeological theory of $G$-valued \v{C}ech cohomology to the classical theory found in the literature on manifolds; see Example~\ref{x:Lie}.  Next, we let $G$ be a diffeological group that is not Lie, but is of prime importance in the study of diffeology: the irrational torus (see \cite[Introduction]{PIZ}, along with content throughout the book.)  We reproduce one of the results of \cite{PIZ}, but using methods developed in this paper, in Example~\ref{x:irrational torus}.  By connecting \v{C}ech cohomology with group cohomology via crossed homomorphisms, we reproduce yet another result regarding the irrational torus; see Subsection~\ref{ss:gp cohom}.  We show that $\check{H}^k(X;\RR)=0$ for $k>0$ when $X$ is the orbit space of a proper Lie groupoid, such as a (diffeological) orbifold, in Subsection~\ref{ss:density}.  Consequently, these spaces have only trivial $\RR$-bundles.  Finally, we reproduce a classical result from sheaf theory that \v{C}ech cohomology forms the right derived functor of the global section functor; see Proposition~\ref{p:derived functor}.

\subsection{Examples}\labell{ss:examples}

\begin{example}[Smooth Manifolds]\labell{x:Lie}
Let $X$ be a smooth manifold and $G$ an abelian Lie group.  Then $X$ admits open covers as generating families, from which it follows that $\check{H}^1(X,G)$ is equal to the standard $G$-valued \v{C}ech cohomology of degree $1$ over $X$ found in the literature.  Indeed, if $\calQ=\{U_\alpha\}$ is such an open cover whose elements are identified with Euclidean open sets, then $\calN(\calQ)_0=\coprod U_\alpha$ and $\calN(\calQ)_1=\coprod U_\alpha\cap U_\beta$, from which it follows that the two cohomologies are the same.  In light of the main theorem of this paper, this is not unexpected, as principal $G$-bundles over $X$ in the diffeological sense are the same as those in the standard sense.  Moreover, if $G$ is any diffeologically discrete abelian group (that is, its diffeology is made up of constant plots), then we obtain the standard \v{C}ech cohomology with values in $G$ as defined by Spanier \cite[page 327]{spanier}, since smooth functions to such a $G$ are locally constant.
\end{example}

\begin{example}[Irrational Tori]\labell{x:irrational torus}
Let $G=\ZZ+\alpha\ZZ\subset\RR$ where $\alpha$ is an irrational number.  This is an abelian diffeologically discrete subgroup of $\RR$ with respect to addition.  The \emph{irrational torus} is the quotient $T_\alpha:=\RR/G$.  It can also be represented by the quotient $\TT^2/\Gamma$ where $\Gamma$ is the diffeological subgroup of $\TT^2$ given by $$\Gamma:=\{[t,\alpha t]\in\TT^2:=\RR^2/\ZZ^2\mid t\in\RR\};$$ see \cite[Article 8.38]{PIZ} for a proof.  It is already known that the quotient map $\pi\colon\TT^2\to T_\alpha$ is a non-trivial principal $\RR$-bundle \cite[Articles 8.38, 8.39]{PIZ}.  We can see this using the tools developed in this paper as follows.  

Let $\varrho\colon\RR\to \RR/G=:T_\alpha$ be the quotient map.  Then $\calQ:=\{\varrho\}$ is a generating family for $T_\alpha$, and $\tau\colon\RR\to\TT^2\colon x\mapsto[0,x]$ is a plotwise local trivialization of $\pi$.  It follows that $c(\tau,\TT^2)(x,x+m+n\alpha)=n\alpha$ for any $m+n\alpha\in G$. Suppose that $\beta\colon\RR\to\RR$ is a $0$-cochain so that $\del\beta(x,x+m+n\alpha)=c(\tau,\TT^2)(x,x+m+n\alpha)$.  Then 
\begin{equation}\labell{e:irrational torus}
\beta(x+m+n\alpha)=n\alpha+\beta(x)
\end{equation}
from which it follows that $\beta$ is independent of $m$, and hence factors through the circle $\mathbb{S}^1:=\RR/\ZZ$.  Thus $\beta$ must have compact image, which is absurd given \eqref{e:irrational torus}.  Thus, $\beta$ does not exist, and $c(\tau,\TT^2)$ represents a non-trivial cohomology class.  The moral is that real-line bundles over diffeological spaces are not necessarily trivial, and can have interesting cohomology.
\end{example}

\subsection{Relation to Group Cohomology}\labell{ss:gp cohom}
This example is motivated by (and is a generalization of) \cite[Article 8.39]{PIZ}.  Suppose $X$ is a diffeological space, $\calQ$ a generating family for $X$, and $G$ an abelian diffeological group.  Suppose that $\calH\toto\calN(\calQ)$ is a diffeological groupoid with source and target $s$ and $t$, resp., such that $(t,s)\colon\calH\to\calN(\calQ)\times\calN(\calQ)$ has image $R(\ev)$.  Given any coycle $f\in\check{C}^1(\calQ,G)$, there is a diffeological groupoid action of $\calH$ on $\calN(\calQ)\times G$ that factors through the action of $R(\ev)$ on $\calN(\calQ)\times G$ induced by $f$, as in Example~\ref{x:gpd action}.

Suppose that $\calH$ is the translation groupoid $\calN(\calQ)\rtimes K$ for a diffeological group $K$; that is, there is some smooth action of $K$ on $\calN(\calQ)$ such that $\ev(y_1)=\ev(y_2)$ if and only if there exists $k\in K$ with $y_2=y_1\cdot k$.  Then $\CIN(\calN(\calQ),G)$ is a $K$-module with $(h\cdot k)(y)=h(y\cdot k)$, and a cocycle $f\in\check{C}^1(\calQ,G)$ induces a \emph{crossed homomorphism} $\kappa_f\colon K\to\CIN(\calN(\calQ),G)$ given by $$\kappa_f(k)(y)=f(y,y\cdot k);$$ this means that for all $k,k'\in K$, $$\kappa_f(kk')(y)=\kappa_f(k)(y)+(\kappa_f(k')\cdot k)(y).$$  Moreover, $\kappa_f$ is smooth with respect to the functional diffeology on $\CIN(\calN(\calQ),G)$ (see Example~\ref{x:functions to gp}), the correspondence $f\mapsto \kappa_f$ is injective since $(t,s)(\calN(\calQ)\rtimes K)=R(\ev)$, and also a homomorphism of abelian groups $\check{C}^1(\calQ,G)\to\Hom(K,\CIN(\calN(\calQ),G))$.  Furthermore, if $f=\del\alpha$ for some $0$-cochain $\alpha\in\check{C}^0(\calQ,G)=\CIN(\calN(\calQ),G)$, then $\kappa_{\del\alpha}(k)=\alpha\cdot k-\alpha$; that is, $\kappa_{\del\alpha}$ is a so-called \emph{principal crossed homomorphism}.  It follows that we obtain a map $\kappa\colon\check{H}^1(\calQ,G)\to H^1(K,\CIN(\calN(\calQ),G))$, where the latter cohomology group is the first group cohomology of the $K$-module $\CIN(\calN(\calQ),G)$, equal to crossed homomorphisms modulo principal crossed homomorphisms.

Now suppose further that $(t,s)\colon\calN(\calQ)\rtimes K\to\calN(\calQ)\times\calN(\calQ)$ is an induction; that is, $\calN(\calQ)\rtimes K$ is diffeomorphic to $R(\ev)$, which implies that the action of $K$ is free.  We obtain a homomorphism sending $\beta\in\CIN(K,\CIN(\calN(\calQ),G))$ to $f_\beta\in\check{C}^1(\calQ,G)$ defined by $$f_\beta(y_1,y_2)=\beta(k(y_1,y_2))(y_1);$$ here $k(y_1,y_2)$ is the unique $k\in K$ such that $y_2=y_1\cdot k$, which depends on $(y_1,y_2)$ smoothly as $(t,s)$ is an induction.  If $\beta$ is a crossed homomorphism, then $f_\beta$ is a cocycle.  If $\beta$ is a principal cross homomorphism $k\mapsto \alpha\cdot k-\alpha$ for some $\alpha\in\CIN(\calN(\calQ),G)=\check{C}^0(\calQ,G)$, then $f_\beta=\del\alpha$.  The correspondences described here and in the previous paragraph are inverses of each other, establishing a bijection between $1$-cocycles and cross homomorphisms which descends to an isomorphism $$\check{H}^1(\calQ,G)\cong H^1(K,\CIN(\calN(\calQ),G)).$$

If we chose $\calQ$ so that its plots have contractible domains, then by Corollary~\ref{c:contr gen fam}, $\check{H}^1(X,G)\cong H^1(K,\CIN(\calN(\calQ),G))$. For an explicit example of this phenomenon, consider the irrational torus of Example~\ref{x:irrational torus}, where $K=\ZZ+\alpha\ZZ\subset\RR$ and $\alpha\in\RR\smallsetminus\QQ$.  If $\calQ=\{\varrho\colon\RR\to T_\alpha\}$ where $\varrho$ is the quotient map by the $K$-action on $\RR$, then $R(\ev)$ is isomorphic to $\RR\rtimes K$, and in conjunction with Theorem~\ref{t:main}, we reproduce the result in \cite[Article 8.39]{PIZ}: principal $\RR$-bundles over $T_\alpha$ are in bijection with $H^1(K,\CIN(\calN(\calQ),G))$.

\subsection{Orbit Spaces of Proper Lie Groupoids, and Orbifolds}\labell{ss:density}
Consider a proper Lie groupoid $\calG_1\toto\calG_0$ whose orbit space $X=\calG_0/\calG_1$ is equipped with the quotient diffeology.  Let $\calV$ be an open cover of $\calG_0$ in which each element is identified diffeomorphically with an open subset of some $\RR^n$, and let $\calU$ be the inclusion maps of elements of $\calV$ composed with the quotient map $\calG_0\to X$. Then $\calU$ is a generating family for the diffeology on $X$. Let $i\colon\calN(\calU)=\coprod\calV\to\calG_0$ be the inclusion map into $\calG_0$ when restricted to each element of $\calV$. The pullback groupoid $i^*\calG$ has base space $\calN(\calU)_0$, arrow space $i^*\calG_1:=\{(u_1,u_2,g)\mid s(g)=i(u_1),\,t(g)=i(u_2)\}$, and source $s$ and target $t$ the first and second projection maps, resp., making $i^*\calG$ into a proper Lie groupoid.

By \cite[Proposition 10.6]{CrainicMestre}, $i^*\calG$ admits a proper normalized Haar density $\rho$; that is, a smooth family of normalized densities on the fibers of $s$ for which $s|_{t^{-1}(\supp(\rho))}\colon t^{-1}(\supp(\rho))\to\calN(\calU)_0$ is proper and which is invariant under right multiplication by elements of $(i^*\calG)_1$ (see \cite[Section 10]{CrainicMestre} for details): $$\rho^u(f):=\int_{s^{-1}(u)}f\,d\mu^u; \quad f\in\CIN(s^{-1}(u)),~u\in\calN(\calU)_0.$$  Define $\delta$ to be the smooth map
$$\delta(u)(h):=\rho^u(t^*h|_{s^{-1}(u)}); \quad u\in\calN(\calU)_0,~h\in\CIN(\calO_u)$$ where $\calO_u$ is the orbit of $u\in\calN(\calU)_0$.  Since the orbits of $i^*\calG$ are exactly the orbits of $\calN(\calU)_1\toto\calN(\calU)_0$, it follows that $\delta$ is $\calN(\calU)_1$-invariant.  Also, since $\rho$ is normalized, $\delta(u)(1)=1$ for all $u$.  Finally, $\delta$ is linear in $h$.

Fix a cocycle $f\in\check{C}^k(\calU;\RR)$ with $k>0$.  Define $g\colon\calN(\calU)_{k-1}\to\RR$ by
$$g(u_0,\dots,u_{k-1}):=\delta(u_0)(f(u_0,\dots,u_{k-1},\cdot)).$$  Since $g$ is smooth, it is in $\check{C}^{k-1}(\calU;\RR)$.  It is straightforward to check that $\del g=(-1)^kf$, from which it follows that $f$ is a coboundary.  Thus, we have that $\check{H}^k(\calU,\RR)=0$ for all $k>0$.

As any generating family of $X$ can be refined to a generating family of the form $\calU$ (take, for instance, a sufficiently fine good cover of $\calG_0$ for $\calV$), we have $\check{H}^k(X,\RR)=0$ for $k>0$.  In particular, all principal $\RR$-bundles over $X$ are trivial.

An immediate consequence for orbifolds is the following.  The ``category'' of orbifolds is often taken to be the bicategory of proper \'etale Lie groupoids (with right bibundles as arrows and bi-equivariant diffeomorphisms for $2$-arrows); see, for instance, \cite{Lerman}.  In the effective case, passing to their orbit spaces yields a functor into diffeological spaces, which is injective on objects up to isomorphism; see \cite{IKZ,Watts-gpds}.  Hence, it makes sense to define orbifolds as diffeological spaces.  It follows from above that $\check{H}^k(X,\RR)=0$ for $k>0$ for any orbifold $X$ (defined as Lie groupoids or as diffeological spaces).  

Finally, we can replace $\RR$ with $\RR^n$ with very little modification to the argument above.  This yields that $\check{H}^k(X,\RR^n)=0$ for $k>0$ for an orbit space $X$ of a proper Lie groupoid.

\subsection{\v{C}ech Cohomology as a Right Derived Functor}\labell{ss:derived functor}

Following Raeburn and Williams, we show that \v{C}ech cohomology forms the right derived functor of the global sections functor, just as in the classical theory of sheaves.

\begin{proposition}\labell{p:derived functor}
	Let $X$ be a diffeological space, and let $$0\Rightarrow F\overset{\Phi}{\Rightarrow} G\overset{\Psi}{\Rightarrow} H\Rightarrow 0$$ be a short exact sequence of sheaves over $X$.  The following is a long exact sequence of abelian groups:
	$$0\to F(X)\overset{\widetilde{\Phi}}{\longrightarrow} G(X) \overset{\widetilde{\Psi}}{\longrightarrow} H(X) \to\check{H}^1(X,F)\to\check{H}^1(X,G)\to\check{H}^1(X,H)\to\dots.$$
\end{proposition}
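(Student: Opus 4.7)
The plan is to mirror the classical derivation of the long exact sequence associated with a short exact sequence of sheaves, taking care that the colimit over refinements in Definition~\ref{d:cech2} interacts correctly with the zig-zag construction. First I would fix a generating family $\calQ$ and note that, exactly as in Proposition~\ref{p:morph cohom}, the morphisms $\Phi$ and $\Psi$ induce cochain maps $\Phi_\sharp,\Psi_\sharp$ between the complexes $\check{C}^\bullet(\calQ,F)$, $\check{C}^\bullet(\calQ,G)$, $\check{C}^\bullet(\calQ,H)$. Conditions~(1) and~(2) of Definition~\ref{d:ses} then give, plot by plot, that $\Phi_\sharp$ is injective and $\ker\Psi_\sharp=\im\Phi_\sharp$.

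Since these cochain maps are compatible with pullback along the maps induced by refinements (by the same argument used in Proposition~\ref{p:morph cohom}), I would pass to the colimit and set $\check{C}^k(X,\cdot):=\varinjlim_{\calQ}\check{C}^k(\calQ,\cdot)$. The poset of generating families ordered by refinement is directed (Lemma~\ref{l:alwayscrs}), so this is a filtered colimit in $\ab$; because filtered colimits are exact in $\ab$ and commute with cohomology, the cohomology of the colimit complex coincides with $\check{H}^\bullet(X,\cdot)$ as in Definition~\ref{d:cech2}. The next step is to show that
\begin{equation*}
0\to\check{C}^\bullet(X,F)\xrightarrow{\Phi_\sharp}\check{C}^\bullet(X,G)\xrightarrow{\Psi_\sharp}\check{C}^\bullet(X,H)\to 0
\end{equation*}
is a short exact sequence of complexes. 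Injectivity of $\Phi_\sharp$ and the identity $\ker\Psi_\sharp=\im\Phi_\sharp$ pass through colimits from the pointwise statements, while surjectivity of $\Psi_\sharp$ requires refinement: given $\eta\in\check{C}^k(\calQ,H)$, view it via Remark~\ref{r:cech} as a coherent family indexed by $(k+1)$-tuples of factorizations of plots of $X$; condition~(3) of Definition~\ref{d:ses} provides, locally on the domain of each such plot, a lift of the value of $\eta$ to $G$. Restricting the plots of $\calQ$ to a suitable open cover (in the style of Lemma~\ref{l:alwayscrs}) produces a refinement $\calR$ on which the pullback of $\eta$ lifts to some $\mu\in\check{C}^k(\calR,G)$, so $[\eta]$ lies in the image of $\Psi_\sharp$ on the colimit.

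With the short exact sequence of complexes in hand, the standard zig-zag lemma produces connecting homomorphisms $\delta\colon\check{H}^k(X,H)\to\check{H}^{k+1}(X,F)$ and the advertised long exact sequence. The identification of the degree-$0$ terms with $F(X)$, $G(X)$, $H(X)$ follows from Proposition~\ref{p:deg-0}, and the exactness among these terms is consistent with Proposition~\ref{p:ses}.

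The hard part will be the surjectivity step. Condition~(3) of Definition~\ref{d:ses} gives only a local lift over each plot domain, and the lifts chosen for different factorizations may disagree on overlaps. The key point I would lean on is that any two such lifts over an overlap differ by an element of $\im\Phi_\sharp$, so the sheafiness of $F$ at each plot level, together with the concrete construction of common refinements from Lemma~\ref{l:alwayscrs}, lets the local lifts be assembled into a single coherent global lift over a sufficiently fine refinement. Once this consistency is secured, the remainder of the argument is a formal application of the snake lemma to the resulting short exact sequence of cochain complexes.
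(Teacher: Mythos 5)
Your overall architecture differs from the paper's. You want to form the colimit cochain complex $\varinjlim_\calQ\check{C}^\bullet(\calQ,\cdot)$, prove it sits in a short exact sequence of complexes, and apply the zig-zag lemma once. The paper instead works at each fixed $\calQ$ with the honest short exact sequence $0\to\check{C}^\bullet(\calQ,F)\to\check{C}^\bullet(\calQ,G)\to\calT^\bullet_\calQ\to 0$, where $\calT^k_\calQ:=\im\Psi^k_\sharp$ is the image subcomplex; it extracts a long exact sequence involving the cohomology $\calH^k_\calQ$ of that subcomplex, passes to the directed limit of these long exact sequences, and only at the end identifies $\calH^k\cong\check{H}^k(X,H)$ by a refinement argument using condition (3) of Definition~\ref{d:ses}. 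Your route also carries a well-definedness issue the paper's formulation avoids: the cochain map $f^\sharp$ induced by a refinement depends on the chosen factorizations $f_\beta$, and two parallel refinement maps give genuinely different (at best chain-homotopic) maps of complexes, so the diagram of cochain complexes indexed by generating families is not obviously filtered; the assertion that the cohomology of the colimit complex agrees with the colimit of cohomologies in Definition~\ref{d:cech2} therefore needs an argument, whereas taking colimits only of cohomology groups and of the long exact sequences (as the paper does) sidesteps it.

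The substantive gap is in your surjectivity step. You propose to reconcile the local lifts supplied by condition (3) of Definition~\ref{d:ses} by observing that two lifts of the same section of $H$ differ on an overlap by an element of $\im\Phi$, and then invoking the sheafiness of $F$ to assemble them into a single coherent lift. This cannot work: the pairwise differences of the local lifts form a degree-one \v{C}ech cocycle with values in $F$ on the relevant cover, and the ability to correct the lifts so that they glue is exactly the vanishing of that class --- precisely the obstruction that the connecting homomorphism $H(X)\to\check{H}^1(X,F)$ is designed to measure. If local lifts could always be assembled as you describe, then $\widetilde{\Psi}$ would already be surjective on global sections and the long exact sequence would degenerate. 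The correct move, which the paper makes in the final two paragraphs of its proof, is not to glue but to shrink: pass to the refinement $\calR$ of $\calQ$ whose plots are the restrictions to the neighbourhoods $V_u$ furnished by condition (3), so that over each plot of $\calN(\calR)_k$ a single lift already exists and no gluing across overlaps is required; distinct tuples of factorizations are distinct plots of $\calN(\calR)_k$ and may be assigned independent lifts. With that correction (and applied, as in the paper, only to identify $\varinjlim\calH^k_\calQ$ with $\check{H}^k(X,H)$, or alternatively to arbitrary cochains in your colimit complex), the argument goes through; as written, the key step is circular.
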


\begin{proof}
	Let $X$ be a diffeological space with generating family $\calQ$ of $\calD_X$, and let $$0\Rightarrow F\overset{\Phi}{\Rightarrow} G\overset{\Psi}{\Rightarrow} H\Rightarrow 0$$ be a short exact sequence of sheaves over $X$.  By Lemma~\ref{l:ses} and Proposition~\ref{p:ses}, we obtain for each $k\geq 0$ an exact sequence
$$0\to\check{C}^k(\calQ,F)\overset{\Phi^k_\sharp}{\longrightarrow}\check{C}^k(\calQ,G)\overset{\Psi^k_\sharp}{\longrightarrow}\check{C}^k(\calQ,H).$$
Define $\calT_\calQ^k:=\im(\Psi^k_\sharp)$.  Since $\del\circ\Psi^k_\sharp=\Psi^{k+1}_\sharp\circ\del$ (see the proof of Proposition~\ref{p:morph cohom}), $\del$ sends $\calT_\calQ^k$ to $\calT_\calQ^{k+1}$ for each $k$.  Thus we have a short exact sequence of complexes:
	$$\xymatrix{
		 \ar@{=}[d] & \ar[d]^{\del} & \ar[d]^{\del} & \ar[d]^{\del} & \ar@{=}[d] \\
		0 \ar[r] \ar@{=}[d] & \check{C}^{k-1}(\calQ,F) \ar[r]^{~~~~\Phi^{k-1}_\sharp} \ar[d]^{\del} & \check{C}^{k-1}(\calQ,G) \ar[r]^{\quad\Psi^{k-1}_\sharp} \ar[d]^{\del} & \calT_\calQ^{k-1} \ar[r] \ar[d]^{\del} & 0 \ar@{=}[d] \\ 
		0 \ar[r] \ar@{=}[d] & \check{C}^k(\calQ,F) \ar[r]^{\Phi^k_\sharp} \ar[d]^{\del} & \check{C}^k(\calQ,G) \ar[r]^{\Psi^k_\sharp} \ar[d]^{\del} & \calT_\calQ^k \ar[r] \ar[d]^{\del} & 0 \ar@{=}[d] \\
		0 \ar[r] \ar@{=}[d] & \check{C}^{k+1}(\calQ,F) \ar[r]^{~~~~\Phi^{k+1}_\sharp} \ar[d]^{\del} & \check{C}^{k+1}(\calQ,G) \ar[r]^{\quad\Psi^{k+1}_\sharp} \ar[d]^{\del} & \calT_\calQ^{k+1} \ar[r] \ar[d]^{\del} & 0 \ar@{=}[d] \\  
		& & & & \\
	}$$
Letting $\calH_\calQ^k:=\ker(\del|_{\calT_\calQ^k})/\im(\del|_{\calT_\calQ^{k-1}})$, there is thus a long exact sequence
$$0\to\check{H}^0(\calQ,F)\to\check{H}^0(\calQ,G)\to\calH_\calQ^0\to\check{H}^1(\calQ,F)\to\check{H}^1(\calQ,G)\to\calH_\calQ^1\to\dots.$$
Let $\calR$ be a refinement of $\calQ$ with induced map $f\colon\calN(\calR)\to\calN(\calQ)$.  It follows from the definitions and the fact that $\Psi^{k+1}_\sharp\circ\del=\del\circ\Psi^k_\sharp$ that $f^\sharp$ sends $\calT^k_\calQ$ to $\calT^k_\calR$, and since $f^\sharp\circ\del=\del\circ f^\sharp$ (see the proof of Lemma~\ref{l:refinementtocoho}), we obtain a map $f^*\colon\calH^k_\calQ\to\calH^k_\calR$.  Let $\calH^k$ be the directed limit of $\calH^k_\calQ$ over refinements $\calQ$.  We obtain the diagram
$$0\to\check{H}^0(X,F)\to\check{H}^0(X,G)\to\calH^0\to\check{H}^1(X,F)\to\check{H}^1(X,G)\to\calH^1\to\dots$$ where the maps are each induced by the universal property of directed limits.  It is straightfoward using common refinements to show that this is an exact sequence.

The work above also shows that the inclusion $\calT^k_\calQ\to\check{C}^k(\calQ,H)$ induces a map $I\colon\calH^k\to\check{H}^k(X,H)$.  Fix $\mu\in\check{H}^k(X,H)$.  There exists a generating family $\calQ$ of $\calD_X$ for which $\mu$ is represented by a cocycle $\mu'\in\check{C}^k(\calQ,H)$.  Since $0\Rightarrow F\Rightarrow G\Rightarrow H\Rightarrow 0$ is a short exact sequence of sheaves, for any plot $(f_0,\dots,f_k)$ of $\calN(\calQ)_k$ and any $u\in U_{(f_0,\dots,f_k)}$ there is an open neighbourhood $V_u\subseteq U_{(f_0,\dots,f_k)}$ of $u$ such that $i_{V_u}^*\mu'\in\im(\ev_k^*\Psi)_{(f_0,\dots,f_k)|_{V_u}}$ where $i_{V_u}\colon V_u\to U_{(f_0,\dots,f_k)}$ is the inclusion.  Let $\calR$ be the refinement of $\calQ$ whose plots are restrictions of $(f_0,\dots,f_k)$ to each $V_u$, letting $(f_0,\dots,f_k)$ run over all plots of $\calN(\calQ)_k$, and let $i$ be the induced map $\calN(\calR)_k\to\calN(\calQ)_k$.  Then $i^\sharp\mu'\in\im(\ev_k^*\Psi^k_\sharp)=\calT_\calR^k$.  Since $i^\sharp$ commutes with $\del$, $i^\sharp\mu'$ is also a cocycle representing $\mu$.  It follows that $I$ is surjective.

Similarly, if $\mu\in\calH^k$ such that $I(\mu)=0$, then there exist a generating family $\calQ$ of $\calD_X$ and cocycle $\mu'\in\calT^k_\calQ$ so that $\mu'=\del\eta$ for some $\eta\in\check{C}^{k-1}(\calQ,H)$.  Constructing a similar refinement $\calR$ to $\calQ$ as above with induced map $i\colon\calN(\calR)_k\to\calN(\calQ)_k$ so that $i^*\eta\in\calT_\calR^{k-1}$, it follows that $i^\sharp\mu'=i^\sharp\del\eta=\del i^\sharp\eta$.  It now follows that $\mu=0$; that is, $I$ is injective, and so $I$ is an isomorphism from $\calH^k$ to $\check{H}^k(X,H)$.  The result now follows from Proposition~\ref{p:deg-0}.
\end{proof}

	Let $X$ be the orbit space of a proper Lie groupoid, such as an orbifold, equipped with the quotient diffeology.  Let $\Gamma$ be a diffeologically discrete subgroup of $\RR^n$, such as $\ZZ^n$ or $\ZZ+\alpha\ZZ$ where $\alpha\in\RR\smallsetminus\QQ$; recall that this means that the subset diffeology on $\Gamma$ is made up of only constant plots.  Let $T=\RR^n/\Gamma$ be the diffeological quotient group.  By Subsection~\ref{ss:density}, $\check{H}^k(X,\RR^n)=0$ for $k>0$.  Thus by Proposition~\ref{p:derived functor}, there is an isomorphism $\check{H}^1(X,T)\cong\check{H}^2(X,\Gamma)$.  By Theorem~\ref{t:main}, we obtain a bijection from $\check{H}^2(X,\Gamma)$ to isomorphism classes of principal $T$-bundles on $X$.  By Example~\ref{x:Lie}, we recover the standard result for a manifold $X$ that its principal $n$-torus bundles are classified by $\check{H}^2(X,\ZZ^n)$, which in turn is isomorphic to the second $\ZZ^n$-valued singular cohomology group of $X$.  In fact, this holds with $\ZZ^n$ replaced with any diffeologically discrete subgroup of $\RR^n$: for any such group $\Gamma$ the $\Gamma$-valued singular cohomology group $H^2(X;\Gamma)$ of a manifold $X$ is isomorphic to the \v{C}ech cohomology group $\check{H}^2(X,\Gamma)$, and hence is in bijection with isomorphism classes of principal ($T=\RR^n/\Gamma$)-bundles over $X$.  A future direction of research is to study exactly how this generalises to more general diffeological spaces $X$ and groups $G$, if at all.  In the meantime:

\begin{corollary}\labell{c:derived functor}
	Let $X$ be the orbit space of a proper Lie groupoid and $T$ the diffeological group $\RR^n/\Gamma$ where $\Gamma\subset\RR^n$ is diffeologically discrete.  Then principal $T$-bundles of $X$ are classified by $\check{H}^2(X,\Gamma)$.  In particular, if $X$ is a manifold, then $\check{H}^2(X,\Gamma)$ is isomorphic to the singular cohomology group $H^2(X;\Gamma)$.
\end{corollary}

\subsection{Comparison with the \v{C}ech Cohomology of Iglesias-Zemmour}\labell{ss:iz cohom}

In this section, we compare the \v{C}ech cohomology on diffeological spaces defined by Iglesias-Zemmour in \cite{PIZCech}.  We first review some terminology and notation from \cite[Sections 8, 10]{PIZCech}.  Fix a diffeological space $X$, and let $\calQ$ be the generating family of all \textbf{round plots}: plots whose domains are open Euclidean balls (of all dimensions).  Let $\calM$ be the corresponding \textbf{gauge monoid}: $$\calM:=\{m\in\CIN(\calN(\calQ),\calN(\calQ))\mid \ev\circ m=\ev \text{ \& $\exists$ finitely-many $p\in\calQ$ s.t. } m|_{U_p}\neq\id_{U_p}\}.$$  Equip $\calM$ with the subset diffeology induced by the standard functional diffeology.  Let $\calA$ be the sheaf of locally constant functions into a fixed abelian diffeological group $G$.  This is a domain-determined sheaf, and so the pullback of $\calA$ by a smooth map yields again the sheaf of locally constant functions to $G$.

The \v{C}ech cohomology defined in \cite[Section 10]{PIZCech} is the Hochschild cohomology of the monoid $\calM^k$ acting on the $\calM$-module of locally constant $G$-valued functions on $\calN(\calQ)$; here, elements of $\calM$ act by pullback.  The cochains are given by
	\begin{align*}
		\check{C}_{IZ}^0(\calM,\calA):=&~\calA(\calN(\calQ)), \\
		\check{C}_{IZ}^k(\calM,\calA):=&~\CIN(\calM^k,\calA(\calN(\calQ))),\quad\text{for $k>0$.}
	\end{align*}
The differential is given by
	\begin{align*}
		\delta(\alpha)(m_0)(y):=&~\alpha(m_0(y))-\alpha(y), \quad \text{for $\alpha\in\check{C}_{IZ}^0(\calM,\calA)$}, \\
		\delta(\eta)(m_0,\dots,m_k)(y):=&~\eta(m_1,\dots, m_k)(m_0(y))+\sum_{i=1}^k(-1)^i\eta(m_0,\dots,m_i\circ m_{i-1},\dots,m_k)(y)\\
								+&~(-1)^{k+1}\eta(m_0,\dots,m_{k-1})(y) 
	\end{align*}
		for $\eta\in\check{C}_{IZ}^k(\calM,\calA)$ with $k>0$, where $(m_0,\dots,m_k)\in\calM^{k+1}$ and $y\in\calN(\calQ)$.  Denote the corresponding cohomology groups by $\check{H}_{IZ}^k(X,G)$.  It follows from the definitions that $\check{H}_{IZ}^0(X,G)$ is the group of locally constant $G$-valued functions on $X$, which by Proposition~\ref{p:deg-0} is equal to $\check{H}^0(X,\calA)$.

	\begin{lemma}\labell{l:loc const sheaf}
		The global sections $\calA(X)$ are exactly the locally constant $G$-valued functions on $X$.
	\end{lemma}

	\begin{proof}
		Given a coherent family $\eta=\{\eta_p\}$, if $f_p^q$ is an arrow between plots $p$ and $q$ with connected domains, then the constant $\eta_p\in G$ is equal to the constant $\eta_q\in G$.  As the D-topology of a diffeological space is locally path-connected (see \cite{CSW,hector,laubinger}), it follows that $\eta$ induces a function $X\to G$ that is locally constant.  Conversely, any locally constant function $X\to G$ induces a global section of $\calA$.
	\end{proof}


It may also be useful to view $\calA(X)$ as the global sections of the sheaf $\CIN(\cdot, G_{\operatorname{discr}})$ on $X$, where $G_{\operatorname{discr}}$ is $G$ with the discrete diffeology.

The map $$\chi\colon\calM\times\calN(\calQ)\to \calN(\calQ)_1\colon(m,y)\mapsto(y,m(y))$$ is well-defined and smooth.  By the exponential law for the category of diffeological spaces, we identify $\CIN(\calM^k,\CIN(\calN(\calQ),G))$ with $\CIN(\calM^k\times\calN(\calQ),G)$ for each $k$.  Given $\eta\in\check{C}^1(\calQ,\calA)$, the pullback $\chi^*\eta$ is a locally constant $G$-valued function on $\calM\times\calN(\calQ)$, which becomes uniquely identified with an element of $\check{C}^1(\calM,\calA)$.

	\begin{lemma}\labell{l:del chi commute}
		The pullback map $\chi^*$ descends to a map $\check{H}^1(\calQ,\calA)\to\check{H}^1_{\IZ}(X,G)$ which we also denote by $\chi^*$.
	\end{lemma}

	\begin{proof}
		Let $\eta$ be a $1$-cocycle: for any plot $(f_0,f_1,f_2)$ of $\calN(\calQ)_2$, $$0=\del\eta(f_0,f_1,f_2)=\eta(f_1,f_2)-\eta(f_0,f_2)+\eta(f_0,f_1).$$  For any $(m_0,m_1)\in\calM^2$ and $y\in\calN(\calQ)$,
			\begin{align*}
				\delta(\chi^*\eta)(m_0,m_1)(y)=&~\chi^*\eta(m_1)(m_0(y))-\chi^*\eta(m_1\circ m_0)(y)+\chi^*\eta(m_0)(y) \\
				=&~\eta(m_0(y),m_1\circ m_0(y))-\eta(y,m_1\circ m_0(y))+\eta(y,m_0(y))\\
				=&~0
			\end{align*}
		where the last line follows from the cocycle condition.  Thus $\chi^*\eta$ is a $1$-cocycle in $\check{C}_\IZ^1(X,G)$.

		If $\eta=\del\alpha$ for some $\alpha\in\check{C}^0(\calQ,\calA)$, then for any $m\in\calM$ and $y\in\calN(\calQ)$,
			$$\chi^*\eta(m,y)=\eta(y,m(y))=\alpha(m(y))-\alpha(y).$$  Here, $\alpha$ is a locally constant $G$-valued function on $\calN(\calF)_0$, and so also is an element of $\check{C}_\IZ^1(X,G)$.  Applying $\delta$ to it: $$\delta\alpha(m,y)=\alpha(m(y))-\alpha(y)=\chi^*\eta(m,y).$$
The result follows.
	\end{proof}

We wish to show that $\chi^*$ is injective on cohomology; this is a corollary of the following lemma.  Recall that $\calN(\calQ)_1$ is the arrow space of the groupoid $R(\ev)$ (see Example~\ref{x:gpd action} for a definition).

	\begin{lemma}\labell{l:almost surj chi}
		Given $(y',y)\in\calN(\calQ)_1$ there exist $m,m'\in\calM$ and $x\in\calN(\calQ)$ so that $(y',y)=\chi(m',x)^{-1}\chi(m,x)$.
	\end{lemma}

	\begin{proof}
		Let $p,p'\in\calQ$ so that $y\in U_p$ and $y'\in U_{p'}$, and let $x=\ev(y)=\ev(y')$.  Denote by $\widehat{x}\colon\RR^0\to X$ the inclusion of $x$ into $X$; this is in $\calQ$.  Define $m\in\calM$ to be the smooth map sending $x\in U_{\widehat{x}}\subset\calN(\calQ)$ to $y\in U_p\subset\calN(\calQ)$ and equal to the identity elsewhere, and $m'\in\calM$ to be the smooth map sending $x$ to $y'\in U_{p'}\subset\calN(\calQ)$ and equal to the identity elsewhere.  Then $\chi(m,x)=(x,y)$ and $\chi(m',x)=(x,y')$, and the result follows.
	\end{proof}

	\begin{corollary}\labell{c:almost surj chi}
		The map $\chi^*\colon\check{H}^1(\calQ,\calA)\to\check{H}_\IZ^1(X,G)$ is injective.
	\end{corollary}

	\begin{proof}
		Suppose $\eta$ is a $1$-cocycle in $\check{C}^1(\calQ,\calA)$ so that $\chi^*\eta=\delta\alpha$ for some $\alpha\in\check{C}^0_\IZ(X,G)$.  Fix $(y',y)\in\calN(\calQ)$.  By Lemma~\ref{l:almost surj chi}, there exist $m,m'\in\calM$ and $x\in\calN(\calQ)$ such that $(y',y)=\chi(m',x)^{-1}\chi(m,x)$.  By the cocycle condition, $\eta(y',y)=-\eta(\chi(m',x))+\eta(\chi(m,x))$.  But $\delta(\alpha)(m',x)=\alpha(m'(x))-\alpha(x)$ and $\delta(\alpha)(m,x)=\alpha(m(x))-\alpha(x)$, hence $\eta(y',y)=\alpha(y)-\alpha(y')=\del\alpha(y',y)$, where we identified $\check{C}^1_\IZ(X,G)$ with $\check{C}^1(\calQ,\calA)$. The result follows.
	\end{proof}

	There is no reason to believe that the map $\chi^*$ is a surjection, however.  Indeed, let $\eta\in\check{C}^1(\calQ,\calA)$ be a $1$-cocycle.  Then for any $m\in\calM$ and $y\in\calN(\calQ)$, we have $\chi^*\eta(m,y)=\eta(y,m(y))$.  In particular, any plot $t\mapsto m_t$ of $\calM$ must keep $m_t(y)$ in the same plot domain, and hence $\eta(y,m_t(y))$ must be constant.  However, given $\mu\in\check{C}^1_\IZ(X,G)$, a plot $t\mapsto m_t$ of $\calM$ may vary the locally constant map $\mu(m_t)$ as $t$ varies: for fixed $y\in\calN(\calQ)$, we obtain a path $\mu(m_t)(y)$ in $G$.  Such $\mu$ cannot be in the image of $\chi^*$ unless $G$ has the discrete diffeology, and it is not clear that adding on a coboundary would remedy this issue.  (One may also suggest changing $\check{C}^1(\calQ,\calA)$ to $\check{C}^1(\calQ,G):=\check{C}^1(\calQ,\CIN(\cdot,G))$ in order to obtain an isomorphism on cohomologies, but $\chi^*$ is no longer well-defined on cochains in this case: given $\eta\in\check{C}^1(\calQ,G)$, the pullback $\chi^*\eta$ is not a map into $\calA$.)

	\begin{lemma}\labell{l:round plot}
		The two cohomlogy groups $\check{H}^1(\calQ,\calA)$ and $\check{H}^1(X,\calA)$ are isomorphic, and hence $\chi^*$ induces an injection $\check{H}^1(X,\calA)\to\check{H}^1_\IZ(X,G)$.
	\end{lemma}

	\begin{proof}
		The first clause follows from the observation that $\calA(\calN(\calQ)_1)=\CIN(\calN(\calQ)_1,G_{\operatorname{discr}})$ and Corollary~\ref{c:contr gen fam}.  The second clause follows from Corollary~\ref{c:almost surj chi}.
	\end{proof}

	For higher order cohomology, we have a map $\chi_k\colon\calM^k\times\calN(\calQ)\to\calN(\calQ)_k$ defined by $$\chi_k(m_1,\dots,m_k,y):=(y,m_1(y),\dots,m_k(y)).$$  It is straightforward to show that $\chi_k\circ\del=\delta\circ\chi_{k-1}$, generalizing the proof of Lemma~\ref{l:del chi commute}, and hence $\chi^*$ descends to a map on cohomology: $\chi^*\colon\check{H}^k(\calQ,\calA)\to\check{H}_\IZ^k(X,G)$.

\end{document}